\newcommand\RR{\ensuremath{\mathbb{R}}}
\newcommand\ZZ{\ensuremath{\mathbb{Z}}}
\newcommand\NN{\ensuremath{\mathbb{N}}}
\newcommand\Tor{\mathsf{T}}
\newcommand\Hex{\mathsf{H}}
\newcommand\Str{\mathsf{S}}
\newcommand\bH{b^{\Hex}}
\newcommand\bS{b^{\Str}}
\newcommand\Cont{\mathsf{C}}
\newtheorem{thm}{Theorem}[section]
\newtheorem{prop}[thm]{Proposition}
\newtheorem{cor}[thm]{Corollary}
\newtheorem{lem}[thm]{Lemma}
\newtheorem{conj}[thm]{Conjecture}
\title{Examples of spectral minimal partitions}
\author{Corentin L\'ena\footnote{Dipartimento di Matematica \emph{Giuseppe Peano}, Universit\`a degli Studi di Torino, Via Carlo Alberto, 10, 10123 Torino (TO), Italia, \texttt{clena@unito.it}}}
\begin{document}
	\maketitle
	
	\begin{abstract}
	
	We study a minimal partition problem on the flat rectangular torus. We give a partial review of the existing literature, and present some numerical and theoretical work recently published elsewhere by V. Bonnaillie-No{\"e}l and the author, with some improvements. 
		
	\end{abstract}
	
	\paragraph{Keywords.}   Minimal partitions, shape optimization,  Dirichlet-Laplacian eigenvalues, symmetrization, finite difference method, projected gradient algorithm.
	
	\paragraph{MSC classification.}  Primary 49Q10; Secondary 35J05, 65K10, 65N06, 65N25. 	
	
\section{Introduction}
\label{secIntro}
\subsection{Minimal partitions}	
The topic of spectral minimal partitions has been actively investigated by the shape optimization community during recent years. In addition to its intrinsic interest, it has many applications, for instance condensed matter physics, mathematical ecology or data sorting. In this review, we focus on one specific problem, for which the quantity to be optimized depends on the Dirichlet Laplacian eigenvalues. This problem is intimately connected with the nodal patterns of Laplacian eigenfunctions. Although we begin by recalling quite general results on minimal partitions in two dimensions, the paper then focus on the model problem of the flat rectangular torus. For the most part, we review the numerical and theoretical results obtained by the author in collaboration with V. Bonnaillie-No\"el in \cite{BonnaillieNoelLena2016ExpMath}. We also present a new lower bound on transition values which improves  existing estimates (Proposition \ref{propbksLower}). We point out that the authors previously studied circular sectors in a similar way \cite{BonnaillieNoelLena2014Sector}.

Let $\Omega$ be a bounded open set in  $\RR^2$ or in a $2$-dimensional Riemannian manifold. For any open subset $D$ of $\Omega$, let $ (\lambda_k(D))_{k\ge 1}$ be the eigenvalues of the Dirichlet Laplacian in $D$, arranged in non-decreasing order and counted with multiplicities. A \emph{$k$-partition} of $\Omega$ is a family $\mathcal D=(D_1,\dots,D_k)$ of open, connected and mutually disjoint subsets. We define its \emph{energy} as $\Lambda_{k}(\mathcal{D})=\max_{1\le i \le k}\lambda_1(D_i)$. A $k$-partition $\mathcal D^*$ is called \emph{minimal} if it has minimal energy, which we denote by $\mathfrak{L}_{k}(\Omega)$.

Let us introduce some additional notions, which enable us to describe the regularity of minimal partitions. We say that the $k$-partition $\mathcal D=(D_1,\dots,D_k)$ is \emph{strong} if it fills the set $\Omega$, that is to say if
\begin{equation*}
 \Omega=\mbox{Int}\left(\cup_{i=1}^k\overline D_i\right)\setminus \partial\Omega.
\end{equation*}
In that case, we define the \emph{boundary} of $\mathcal{D}$ as $N(\mathcal D):=\overline{\cup_{i=1}^k{\partial D_i}\setminus \partial \Omega}$. We say that $\mathcal D$ is \emph{regular} if it is strong $N(\mathcal D)$ satisfies the following properties.
\begin{enumerate}[i.]
	\item It is a union of regular arcs connecting a finite number of singular points (inside $\Omega$ or possibly on $\partial\Omega$).
	\item At the singular points, the arcs meet with equal angles (taking into account $\partial\Omega$ if necessary).
\end{enumerate}
Point ii is called the \emph{equal angle meeting property}. Let us note that these properties of $N(\mathcal D)$ are also satisfied by the nodal set of a Dirichlet Laplacian eigenfunction. However, in this latter case, the singular points inside $\Omega$ are crossing points, so the number of arcs meeting there must be even. This number can be odd in the case of a minimal partition. 

Existence and regularity of minimal partitions follow from the work of several authors: D. Bucur, G. Buttazzo, and A. Henrot \cite{BucButHen98};  L. Caffarelli and F.-H. Lin \cite{CafLin07}; M. Conti, S. Terracini, and G. Verzini \cite{ConTerVer05b}; B. Helffer, T. Hoffmann-Ostenhof, and S. Terracini \cite{HelHofTer09}. In the rest of the paper, we refer to the results by Helffer, Hoffmann-Ostenhof, and Terracini.
\begin{thm} Let $\Omega$ be a bounded open set in $\RR^2$ with a piecewise-$C^{1,+}$ boundary and satisfying the interior cone property. Then, for any positive integer $k$, 
\begin{enumerate}[i.]
	\item there exists a minimal $k$-partition of $\Omega$;
	\item any minimal $k$-partition of $\Omega$ is regular up to $0$-capacity sets.
\end{enumerate}
\end{thm}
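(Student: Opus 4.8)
The plan is to handle existence (point i) by the direct method of the calculus of variations, and regularity (point ii) by a free-boundary blow-up analysis, following the strategy of the cited works.

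For existence, I would first reformulate the combinatorial partition problem as a variational problem over $k$-tuples of functions. To a strong $k$-partition $\mathcal{D}=(D_1,\dots,D_k)$ one associates the normalized first eigenfunctions $u_i\in H^1_0(D_i)\subset H^1_0(\Omega)$, extended by zero, which satisfy $u_iu_j=0$ almost everywhere for $i\neq j$, and for which $\Lambda_k(\mathcal{D})=\max_i \|\nabla u_i\|_{L^2}^2/\|u_i\|_{L^2}^2$. The relaxed problem is to minimize this quantity over the closed class of such segregated tuples. Along a minimizing sequence, normalization gives boundedness in $H^1_0(\Omega)$, so by the Rellich--Kondrachov theorem one extracts a subsequence converging strongly in $L^2$ and weakly in $H^1$; strong $L^2$ convergence preserves the disjointness constraint $u_iu_j=0$ in the limit, while weak lower semicontinuity of the Dirichlet energy shows that the limit tuple realizes $\mathfrak{L}_{k}(\Omega)$. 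One then recovers a minimal partition by setting $D_i=\operatorname{Int}(\operatorname{supp}u_i)$. The boundary regularity and the interior cone property of $\Omega$ enter here to ensure the limiting sets are genuinely open and that no mass escapes along $\partial\Omega$.

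For regularity, the central observation is that on a minimizer each $u_i$ solves $-\Delta u_i=\lambda_1(D_i)\,u_i$ in $D_i$ with $u_i\ge 0$, and minimality forces the eigenvalues $\lambda_1(D_i)$ of neighbouring cells to coincide with the common value $\mathfrak{L}_{k}(\Omega)$, since otherwise an energy-decreasing perturbation of the interface would exist. Consequently any signed combination $v=\sum_i c_iu_i$ with $c_i\in\{-1,+1\}$ satisfies the differential inequality $|\Delta v|\le \mathfrak{L}_{k}(\Omega)\,|v|$ across the free boundary, placing the problem in the framework of strongly competing segregated states. The structure of $N(\mathcal{D})$ is then extracted from an Almgren-type monotonicity formula: the frequency function
\begin{equation*}
  N(x_0,r)=\frac{r\int_{B_r(x_0)}|\nabla v|^2}{\int_{\partial B_r(x_0)}v^2}
\end{equation*}
is, up to lower-order corrections due to the zeroth-order term, monotone nondecreasing in $r$. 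Its limit as $r\to 0$ is the vanishing order $\gamma(x_0)$, and monotonicity yields at each interface point a blow-up limit that is homogeneous of degree $\gamma(x_0)$ and segregated. Classifying such homogeneous segregated harmonic functions in the plane gives $\gamma(x_0)\in\tfrac12\NN$, with model profile consisting of $2\gamma(x_0)$ half-lines meeting at equal angles. Points with $\gamma=1$ are regular interface points where exactly two cells meet along a $C^{1,+}$ arc, by standard elliptic boundary regularity, whereas $\gamma=m/2$ with $m\ge 3$ yields the finitely many singular points at which $m$ arcs meet with equal angles, which is precisely the equal angle meeting property.

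The main obstacle is the regularity statement, and within it the monotonicity formula together with the blow-up classification. Establishing monotonicity in the presence of the eigenvalue term, and upgrading weak blow-up convergence to the strong convergence needed to read off the local geometry, requires the substantial machinery of Caffarelli--Lin and Conti--Terracini--Verzini; the finiteness of the singular set and the $C^{1,+}$ regularity of the arcs then follow once the frequency is controlled. The exceptional ``up to $0$-capacity sets'' clause reflects that a negligible set, invisible to the $H^1$ energy, may fail to inherit this structure, so the geometric conclusions hold only after discarding a set of zero capacity.
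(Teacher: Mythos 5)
The paper you are working against does not actually prove this theorem: it is a survey statement, quoted from Helffer--Hoffmann-Ostenhof--Terracini \cite{HelHofTer09} after crediting \cite{BucButHen98,CafLin07,ConTerVer05b}, so there is no in-paper argument to compare with. Your sketch reconstructs the strategy of those references, which is the correct one, but judged as a proof it has two concrete gaps. First, in the existence part, the crux is exactly the step you dispatch in one line: setting $D_i=\operatorname{Int}(\operatorname{supp}u_i)$ does not recover an admissible partition. A function $u_i\in H^1_0(\Omega)$ vanishing a.e.\ outside a closed set need not belong to $H^1_0$ of that set's interior, and that interior need not be connected, whereas the definition of a $k$-partition requires open \emph{connected} cells; this is precisely why \cite{BucButHen98} work in the class of quasi-open sets and why the conclusion carries the ``up to $0$-capacity sets'' caveat. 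Moreover, you misattribute the role of the hypotheses on $\partial\Omega$: the piecewise-$C^{1,+}$ boundary and interior cone property are needed to control the part of $N(\mathcal D)$ meeting $\partial\Omega$, not to make the limiting cells open --- openness and regularity of the cells come from the interior regularity theory (Lipschitz bounds on the segregated eigenfunctions), independently of the boundary assumptions.

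Second, in the regularity part, your claim that ``minimality forces the eigenvalues $\lambda_1(D_i)$ of neighbouring cells to coincide with $\mathfrak{L}_k(\Omega)$, since otherwise an energy-decreasing perturbation of the interface would exist'' is exactly the point where the max functional differs from the sum functional, and the naive perturbation fails: pushing the interface so as to lower $\lambda_1(D_j)$ for one cell attaining the maximum does not lower $\Lambda_k(\mathcal D)=\max_i\lambda_1(D_i)$ when several cells attain it simultaneously. (For the sum of eigenvalues, studied by Caffarelli--Lin and Conti--Terracini--Verzini, your argument is fine; for the max it is not.) In \cite{HelHofTer09} the equal-eigenvalue property of neighbours is the pair compatibility condition --- Corollary \ref{corPCC} above --- and it is a \emph{consequence} of the subpartition property, which itself is established after the regularity theory; using it as an input to the regularity proof inverts the logical order. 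Finally, the Almgren monotonicity formula and the blow-up classification you appeal to constitute the actual analytic content of the cited works and are invoked rather than executed, so what you have written is an accurate outline of the literature --- consistent with how the paper itself treats this theorem --- but not a self-contained proof.
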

Reference \cite{HelHofTer09} also establishes the  \emph{subpartition property}, which we use at the end of the present paper.
\begin{thm}
	Let $\mathcal D=(D_i)_{1\le i \le k}$ be a minimal $k$-partition of $\Omega$. Let $I\subset \{1,\dots, k\}$ with $k':=\sharp I$, $k'<k$, such that 
	\[\Omega_{I}:=\mbox{Int}\left(\bigcup_{i\in I}\overline{D}_i\right)\]
	is a connected open set. Then
	the sub-partition $\mathcal D_I=(D_i)_{i\in I}$ is the unique minimal $k'$-partition of $\Omega_I$ (up to $0$-capacity sets).
\end{thm}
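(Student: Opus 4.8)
\emph{Setup.} Write $M=\mathfrak{L}_k(\Omega)=\max_{1\le i\le k}\lambda_1(D_i)$. Since the $D_i$ ($i\in I$) are open, connected, mutually disjoint, and fill $\Omega_I$ by the very definition of $\Omega_I$, the family $\mathcal D_I$ is an admissible $k'$-partition of $\Omega_I$, so that $\mathfrak L_{k'}(\Omega_I)\le \Lambda_{k'}(\mathcal D_I)$. The task is therefore to prove the reverse inequality together with uniqueness, and the natural tool is a cut-and-paste (exchange) argument. Given any competitor $\mathcal F=(F_1,\dots,F_{k'})$ for $\Omega_I$, glue it to the unchanged exterior cells to form
\[
\tilde{\mathcal D}:=(F_1,\dots,F_{k'})\cup (D_j)_{j\notin I}.
\]
Since the $F_\ell\subset\Omega_I$ are disjoint from the $D_j$ with $j\notin I$, this is an admissible $k$-partition of $\Omega$ with $\Lambda_k(\tilde{\mathcal D})=\max\bigl(\Lambda_{k'}(\mathcal F),\,E\bigr)$, where $E:=\max_{j\notin I}\lambda_1(D_j)$.

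\emph{Minimality.} Minimality of $\mathcal D$ gives $\Lambda_k(\tilde{\mathcal D})\ge M$. If the global maximum is attained strictly inside $I$, i.e. $E<\Lambda_{k'}(\mathcal D_I)=M$, this forces $\Lambda_{k'}(\mathcal F)\ge M=\Lambda_{k'}(\mathcal D_I)$, and $\mathcal D_I$ is minimal. The delicate case is $E=M$: then $\Lambda_k(\tilde{\mathcal D})=M$ for \emph{every} competitor $\mathcal F$, and the comparison is vacuous. Here I would invoke the equipartition property of minimal partitions, namely the (standard) structural fact that $\lambda_1(D_i)=M$ for all $i$. Indeed, if some competitor satisfied $\Lambda_{k'}(\mathcal F)<M$, then $\tilde{\mathcal D}$ would attain the energy $M=\mathfrak L_k(\Omega)$ and hence be itself a minimal $k$-partition of $\Omega$; yet it has cells $F_\ell$ with $\lambda_1(F_\ell)<M$, contradicting equipartition. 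Thus $\Lambda_{k'}(\mathcal F)\ge M$ for all $\mathcal F$, so $\mathfrak L_{k'}(\Omega_I)=M=\Lambda_{k'}(\mathcal D_I)$ and $\mathcal D_I$ is minimal.

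\emph{Uniqueness, the main obstacle.} Let $\mathcal G$ be another minimal $k'$-partition of $\Omega_I$; gluing it to the exterior yields a second minimal $k$-partition $\tilde{\mathcal D}$ of $\Omega$ that coincides with $\mathcal D$ outside $\Omega_I$. Energy comparison and equipartition are now powerless, since both partitions sit at the common value $M$ (recall that minimal partitions need \emph{not} be unique in general, as rotations of symmetric examples show). The route I would take is to exploit the finer eigenfunction structure granted by the regularity theorem above: on each cell choose the positive $L^2$-normalised first Dirichlet eigenfunction, and assemble these, with alternating signs across the regular arcs of the boundary set, into a single eigenfunction of an Aharonov--Bohm-type operator whose zero set is exactly $N(\mathcal D)$, respectively $N(\tilde{\mathcal D})$. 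Since $\mathcal D$ and $\tilde{\mathcal D}$ agree on the open exterior region, the two assembled eigenfunctions agree there; away from all singular points they solve $-\Delta u=Mu$, so a unique continuation argument on the connected set $\Omega_I$ minus the finitely many singular points propagates this equality into $\Omega_I$, forcing $N(\tilde{\mathcal D})=N(\mathcal D)$ and hence $\mathcal G=\mathcal D_I$ up to a set of zero capacity.

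The serious difficulty lies entirely in this last step: carrying out the sign and flux bookkeeping needed to build the global eigenfunctions, and above all making unique continuation rigorous across the singular points, where the operator is singular, the two partitions a priori carry different singularities, and the equal-angle meeting property must be used to reconcile them. By contrast, once equipartition is available the minimality step is essentially formal.
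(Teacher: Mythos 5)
The paper itself offers no proof of this theorem: it is quoted from \cite{HelHofTer09} ("Reference \cite{HelHofTer09} also establishes the subpartition property"), so your attempt can only be judged on its own merits. Your \emph{minimality} half is on the right track: the gluing of a competitor $\mathcal F$ with the unchanged exterior cells is the standard first move, and your treatment of the easy case $E<M$ is correct. The problem is how you dispose of the delicate case $E=M$: you invoke equipartition ($\lambda_1(D_i)=\mathfrak L_k(\Omega)$ for every cell of every minimal $k$-partition) as a "standard structural fact". In the Helffer--Hoffmann-Ostenhof--Terracini framework this fact is normally \emph{deduced} from the pair compatibility condition (Corollary \ref{corPCC}), which is precisely the case $\sharp I=2$ of the theorem you are trying to prove, so as written your argument has a genuine circularity risk. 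To close it you would need an independent proof of equipartition --- e.g.\ a cascading boundary-deformation argument using the regularity of minimal partitions, strict domain monotonicity of $\lambda_1$, and connectedness of $\Omega$ (note equipartition is false for disconnected $\Omega$, so connectedness must enter somewhere) --- and that deformation argument is of essentially the same depth as proving the non-uniqueness part of the statement directly.

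The \emph{uniqueness} half is a genuine gap, as you yourself concede: nothing is proved, only a strategy is named. Worse, the strategy has an obstruction beyond the technical points you list. Assembling the cell-wise ground states "with alternating signs across the regular arcs" into a global eigenfunction presupposes that the partition is bipartite; minimal partitions are in general \emph{not} bipartite (an odd number of arcs may meet at a singular point --- the paper emphasizes this), which is exactly why the Aharonov--Bohm machinery of \cite{HelHof13a} is required. But once poles and fluxes are introduced, the two partitions $\mathcal D$ and $\tilde{\mathcal D}$ carry a priori different singular points inside $\Omega_I$, so the two assembled functions are eigenfunctions of \emph{different} operators, and the unique continuation comparison between them is not even well posed as stated. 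Since uniqueness is the part of the theorem that delivers the simplicity of $\lambda_2(D_{i,j})$ in Corollary \ref{corPCC} --- the very property the paper later uses to test hexagonal tilings numerically --- the attempt cannot be regarded as a proof of the stated result, but at best of a weakened form ("$\mathcal D_I$ is \emph{a} minimal $k'$-partition"), and even that only modulo an independent proof of equipartition.
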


\begin{cor}[pair compatibility condition] 
\label{corPCC}
	Let $\mathcal D=(D_i)_{1\le i \le k}$ ($k\ge3$) be a minimal $k$-partition of $\Omega$. For any two neighbors $D_i$ and $D_j$, the second eigenvalue of the Dirichlet Laplacian on 	$D_{i,j}:=\mbox{Int}\left(\overline D_i\cup \overline D_j\right)$,
	is simple, and $D_i$ and $D_j$ are the nodal domains of an eigenfunction associated with $\lambda_2(D_{i,j})$.
\end{cor}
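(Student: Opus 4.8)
The plan is to deduce this from the subpartition property (the preceding theorem) by reducing everything to the case $k=2$, and then to invoke the standard identification of minimal $2$-partitions with nodal partitions of second eigenfunctions. Since $D_i$ and $D_j$ are neighbors, their common boundary carries a regular arc, so $D_{i,j}=\mathrm{Int}(\overline D_i\cup\overline D_j)$ is a connected open set. As $k\ge 3$ we have $k'=\sharp\{i,j\}=2<k$, so the subpartition property applies to $I=\{i,j\}$ and yields that $(D_i,D_j)$ is the \emph{unique} minimal $2$-partition of $D_{i,j}$ (up to $0$-capacity sets), with $\mathfrak L_2(D_{i,j})=\max(\lambda_1(D_i),\lambda_1(D_j))$.

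First I would establish, for a connected open set $U$, the identity $\mathfrak L_2(U)=\lambda_2(U)$ together with the correspondence between minimal $2$-partitions and nodal partitions. For $\lambda_2(U)\le\mathfrak L_2(U)$, given any $2$-partition $(A,B)$ I extend the positive first eigenfunctions $\phi_A\in H^1_0(A)$ and $\phi_B\in H^1_0(B)$ by zero; they span a two-dimensional subspace of $H^1_0(U)$ on which, the supports being disjoint, the Rayleigh quotient never exceeds $\max(\lambda_1(A),\lambda_1(B))$, so the min-max principle gives $\lambda_2(U)\le\Lambda_2(A,B)$. For the reverse bound, a second eigenfunction changes sign, being orthogonal to the positive ground state, and by Courant's theorem has exactly two nodal domains; its restriction to each is a sign-definite eigenfunction, hence a ground state of that domain with eigenvalue $\lambda_2(U)$, so the nodal partition has energy $\lambda_2(U)$. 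This proves $\mathfrak L_2(U)=\lambda_2(U)$ and that every nodal partition of a second eigenfunction is minimal.

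Next I would show conversely that the minimal $2$-partition $(D_i,D_j)$ is nodal, giving the second assertion. With $V=\mathrm{span}(\phi_{D_i},\phi_{D_j})$ as above, the previous computation shows $V$ is optimal in the min-max for $\lambda_2(D_{i,j})$. Projecting $V$ onto the spectral subspace of eigenvalues $\le\lambda_2(D_{i,j})$ and using that any component in the range of eigenvalues $>\lambda_2(D_{i,j})$ would push the Rayleigh quotient strictly above $\lambda_2(D_{i,j})$, I obtain that a genuine second eigenfunction $u$ lies in $V$, say $u=\alpha\phi_{D_i}+\beta\phi_{D_j}$. Since $u$ changes sign, $\alpha\beta\ne 0$ with opposite signs; comparing $-\Delta u=\lambda_2(D_{i,j})\,u$ with $-\Delta u=\lambda_1(D_i)\,u$ on $D_i$ (and similarly on $D_j$) forces $\lambda_1(D_i)=\lambda_1(D_j)=\lambda_2(D_{i,j})$ and identifies $\{u>0\}$ and $\{u<0\}$ with $D_i$ and $D_j$. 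Hence $D_i$ and $D_j$ are the nodal domains of the second eigenfunction $u$.

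Finally, simplicity follows from uniqueness. By the two directions above, the nodal partition of \emph{every} second eigenfunction is a minimal $2$-partition of $D_{i,j}$, so by the uniqueness furnished by the subpartition property each equals $(D_i,D_j)$ up to $0$-capacity. If $\lambda_2(D_{i,j})$ had multiplicity at least two, I would pick independent second eigenfunctions $u,v$; restricted to $D_i$ they are sign-definite ground states, hence proportional because $\lambda_1(D_i)$ is simple, and likewise on $D_j$. Writing $v=\mu u$ on $D_i$ and $v=\nu u$ on $D_j$, the eigenfunction $v-\mu u$ vanishes on $D_i$: if $\mu\ne\nu$ it is sign-definite, contradicting orthogonality to the ground state, while $\mu=\nu$ makes $u,v$ dependent. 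This contradiction shows $\lambda_2(D_{i,j})$ is simple. I expect the delicate point to be the rigorous $H^1_0$-level bookkeeping in the optimal-subspace projection and in the restriction-to-nodal-domain steps (justifying that restrictions lie in $H^1_0$ of the pieces and that the regularity of $N(\mathcal D)$ renders the ``up to $0$-capacity'' caveats harmless); the structural core, however, is simply the combination of uniqueness with the simplicity of the first eigenvalue on each connected piece.
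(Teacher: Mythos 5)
Your proposal is correct and takes the route the paper intends: the paper states this corollary without a written proof, as an immediate consequence of the subpartition property (applied to $I=\{i,j\}$, using that neighbors give a connected $D_{i,j}$) together with the standard facts about minimal $2$-partitions recalled in Section 1.2, namely $\mathfrak{L}_2(U)=\lambda_2(U)$ and the identification of minimal $2$-partitions with nodal partitions of second eigenfunctions. You supply proofs of those facts plus the simplicity argument, which is exactly what a self-contained write-up requires.

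One local step is misstated, though harmlessly. In your third paragraph you claim that any component of a vector of $V$ in the spectral range $(\lambda_2,\infty)$ would force its Rayleigh quotient strictly above $\lambda_2$; this is false as stated: if $\psi_1$ is the ground state and $\psi_3$ an eigenfunction for an eigenvalue above $\lambda_2$, then $\psi_1+\varepsilon\psi_3$ has Rayleigh quotient below $\lambda_2$ for small $\varepsilon$ despite its high-frequency component. The correct version first picks a nonzero $u\in V$ orthogonal to the ground state (possible since $V$ is $2$-dimensional and the ground state spans a $1$-dimensional space), for which the variational characterization gives $\lambda_2\le Q(u)\le \lambda_2$; then $u$ minimizes the Rayleigh quotient over the orthogonal complement of the ground state and is therefore a $\lambda_2$-eigenfunction. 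In fact that whole paragraph is redundant: once you know (your second paragraph) that the nodal partition of \emph{every} second eigenfunction is a minimal $2$-partition, the uniqueness furnished by the subpartition property already identifies the nodal domains of any second eigenfunction with $D_i$ and $D_j$ up to $0$-capacity sets, which is precisely the observation your simplicity argument runs on. So the proof stands with that paragraph either repaired or deleted.
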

\subsection{Nodal partitions}
If $u$ is an eigenfunction of the Dirichlet Laplacian in $\Omega$, the connected components of the complement of its zero set are called its nodal domains. Let us denote by $\nu(u)$ the number of nodal domain of $u$. The family $\mathcal{D}_u=(D_i)_{1\le i \le \nu(u)}$ of all the nodal domains of $u$ is the nodal partition associated with $u$. Given a regular $k$-partition $\mathcal{D}=(D_i)_{1\le i \le k}$, we say that two domains $D_i$ and $D_j$ are neighbors if they have a common boundary not reduced to points, that is to say if the set $D_{i,j}:=\mbox{Int}\left(\overline D_i\cup \overline D_j\right)$ is connected.

\begin{thm}
	A minimal $k$-partition  of $\Omega$ is nodal  if, and only if, it is bipartite, that is to say if we can color its domains with only two colors such that two neighbors have a different color.
\end{thm}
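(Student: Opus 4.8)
The plan is to treat the two implications separately, the direction \emph{nodal $\Rightarrow$ bipartite} being elementary while the converse carries the weight of the argument. For the easy direction, suppose $\mathcal{D}=\mathcal{D}_u$ is the nodal partition of an eigenfunction $u$. I would colour each cell $D_i$ by the (constant) sign of $u$ on it. If $D_i$ and $D_j$ are neighbours, their common boundary is an arc $\Gamma\subset\{u=0\}$ not reduced to points; away from the finitely many singular points the nodal set consists of regular arcs along which $\nabla u\neq0$ and across which $u$ changes sign, so the constant signs of $u$ on $D_i$ and $D_j$ are opposite. Hence neighbours receive different colours and $\mathcal{D}$ is bipartite.

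For the converse, assume the minimal $k$-partition $\mathcal{D}$ is bipartite, with colour classes $A$ and $B$. I would first record that all ground-state energies coincide: by the pair compatibility condition (Corollary~\ref{corPCC}) neighbouring cells satisfy $\lambda_1(D_i)=\lambda_2(D_{i,j})=\lambda_1(D_j)$, and since the adjacency graph of a strong partition of the connected set $\Omega$ is connected, every $\lambda_1(D_i)$ equals the common value $\lambda:=\mathfrak{L}_k(\Omega)$. Writing $\phi_i>0$ for the first eigenfunction on $D_i$, the goal is to choose scalars $c_i$, positive on $A$ and negative on $B$, so that $u:=\sum_i c_i\phi_i$ (each $\phi_i$ extended by $0$) is a genuine eigenfunction for $\lambda$; its nodal domains will then be exactly the cells, proving nodality. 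Since each $\phi_i$ vanishes on $\partial D_i$, the function $u$ is continuous across $N(\mathcal{D})$, hence lies in $H^1_0(\Omega)$, and it already solves $-\Delta u=\lambda u$ inside each cell; testing the weak formulation against $\varphi\in C_c^\infty(\Omega)$ shows that $u$ is an eigenfunction exactly when the outward normal derivatives cancel along each interface, that is $c_i\partial_\nu\phi_i=c_j\partial_\nu\phi_j$ pointwise on every shared arc $\Gamma_{ij}$. Here Corollary~\ref{corPCC} is decisive: the eigenfunction $\psi_{ij}$ on $D_{i,j}$ that it provides restricts to $r_{ij}\phi_i$ on $D_i$ and $-s_{ij}\phi_j$ on $D_j$ with $r_{ij},s_{ij}>0$, and being smooth across the interior arc $\Gamma_{ij}$ it forces $\partial_\nu\phi_j/\partial_\nu\phi_i$ to equal the constant $-r_{ij}/s_{ij}$ there. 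The pointwise matching therefore collapses to the single scalar relation $|c_i|/|c_j|=r_{ij}/s_{ij}$ per edge, while the required sign flip is precisely what the bipartite colouring supplies.

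It remains to fix the magnitudes consistently, and this is the step I expect to be the main obstacle. I would propagate the ratios $r_{ij}/s_{ij}$ along a spanning tree of the adjacency graph; the difficulty is to verify the matching across the remaining interfaces, equivalently that $\prod r_{ij}/s_{ij}=1$ around every cycle. Both features of bipartiteness enter here. On the one hand, since a bipartite graph has no odd cycle, an even number of arcs must meet at each interior singular point, so that with the equal angle meeting property the boundary $N(\mathcal{D})$ has, near each singularity, exactly the local structure $r^p\cos(p\theta+\theta_0)$ of an eigenfunction's nodal set; this single local harmonic pins down the relative sizes of the $c_i$ for all cells meeting that point and reconciles the ratios around the loops encircling it. On the other hand, the uniqueness furnished by the subpartition property forbids competing normalisations. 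Once consistency is secured, $u$ is $C^1$ across every regular arc and extends as an eigenfunction across the finitely many singular points, which have zero capacity; hence $-\Delta u=\lambda u$ on $\Omega$ with nodal domains the $D_i$, and $\mathcal{D}$ is nodal. I expect the reconciliation of the cycle condition at the singular points, rather than the local gluing along the arcs, to be the most delicate part of the argument.
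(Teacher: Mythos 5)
First, a point of reference: the paper itself gives no proof of this theorem---it is quoted from Helffer--Hoffmann-Ostenhof--Terracini \cite{HelHofTer09}---so your proposal can only be measured against the argument in that literature, whose overall skeleton you have correctly reproduced. Your easy direction (nodal $\Rightarrow$ bipartite) is fine, and for the converse the steps you set up---equality of all the $\lambda_1(D_i)$ via Corollary~\ref{corPCC} and connectedness of the adjacency graph, the ansatz $u=\sum_i c_i\phi_i$ with signs dictated by the colouring, and the reduction of the interface matching to one scalar ratio per edge---are the standard ones. The genuine gap is exactly where you locate it, in the cycle condition $\prod r_{ij}/s_{ij}=1$, and neither of the two mechanisms you invoke closes it. Knowing that $N(\mathcal D)$ near a singular point of degree $2p$ has the same shape as the zero set of $r^p\cos(p\theta+\theta_0)$ says nothing, by itself, about the eigenfunctions $\phi_i$; what is actually needed is the corner asymptotics of each $\phi_i$ at that point, $\phi_i=C_i\,r^{p}\sin(p\theta_i)+o(r^{p})$ in sector coordinates, together with the strict positivity $C_i>0$ (itself requiring an argument: $C_i=0$ would force a sign change of $\phi_i$ near the corner), and then the observation that each edge ratio $r_{ij}/s_{ij}$, being constant along the arc $\Gamma_{ij}$, equals the limiting ratio of the normal derivatives at points of $\Gamma_{ij}$ approaching the singular point, hence a ratio of the coefficients $C_i,C_j$; only then does the product around the point telescope to $1$. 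Likewise, the sentence claiming that the uniqueness in the subpartition property ``forbids competing normalisations'' has no mathematical content: that uniqueness constrains domains, not normalisation constants of eigenfunctions.

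The second and more serious hole is that even with the singular-point analysis supplied, it only controls cycles of the adjacency graph that bound a disc containing singular points. This suffices when $\Omega$ is simply connected, since the cycle space of the embedded graph is then generated by such loops, but the theorem is stated---and is used throughout this paper---for the torus $\Tor(a,b)$. There, cycles can be non-contractible and encircle no singular point at all: the strip partitions $\mathcal D_k(1,b)$ with $k$ even are bipartite, minimal for small $b$, have \emph{empty} singular set, and their adjacency graph is a single $k$-cycle winding around the torus. For such a cycle your construction leaves a free monodromy: going once around, $u$ returns multiplied by the product of the edge ratios, and no local argument forces that product to equal $1$. This is precisely the obstruction that the literature handles by lifting to covering spaces (as in \cite{HelHof14}) or through the Aharonov--Bohm characterisation of \cite{HelHof13a}, where bipartiteness removes the half-integer fluxes attached to singular points but the fluxes around the handles or holes of $\Omega$ must be treated separately. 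As written, your argument is complete only for simply connected $\Omega$, and even there only once the corner-asymptotics step above is made precise.
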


\begin{thm}[Courant, 1923]	
	If $u$ is an  eigenfunction  associated with $\lambda_k(\Omega)$, $\nu(u)\le k$.
\end{thm}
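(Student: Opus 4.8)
The plan is to argue by contradiction, combining the Courant--Fischer min-max characterisation of $\lambda_k(\Omega)$ with the unique continuation property of Laplacian eigenfunctions. Suppose $u$ is associated with $\lambda_k(\Omega)$ but that $\nu(u)\ge k+1$, and pick $k+1$ of its nodal domains, relabelled $D_1,\dots,D_{k+1}$. For each $i$ I set $u_i:=u\,\mathbf 1_{D_i}$, the restriction of $u$ to $D_i$ extended by zero. Since eigenfunctions are continuous and $u$ vanishes on the nodal set, each $u_i$ lies in $H_0^1(\Omega)$; moreover $u_i$ has constant sign on $D_i$ and solves $-\Delta u_i=\lambda_k(\Omega)u_i$ there, so $u_i$ is a ground state of the Dirichlet Laplacian on $D_i$ and $\lambda_1(D_i)=\lambda_k(\Omega)$.

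The first step is to evaluate the Rayleigh quotient on $V:=\mathrm{span}(u_1,\dots,u_{k+1})$. Because the $u_i$ have pairwise disjoint supports they are $L^2$-orthogonal and independent, so $\dim V=k+1$; and integrating $-\Delta u_i=\lambda_k(\Omega)u_i$ by parts shows that every $v=\sum_i c_i u_i$ satisfies $\int_\Omega|\nabla v|^2=\lambda_k(\Omega)\int_\Omega v^2$. Hence the Rayleigh quotient is identically equal to $\lambda_k(\Omega)$ on $V\setminus\{0\}$.

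The second step is a dimension count. Let $\phi_1,\dots,\phi_{k-1}$ be eigenfunctions for $\lambda_1(\Omega),\dots,\lambda_{k-1}(\Omega)$. The $k-1$ orthogonality conditions $\int_\Omega v\,\phi_j=0$ together with the single extra condition $c_{k+1}=0$ form $k$ linear constraints on the $(k+1)$-dimensional space $V$, hence admit a nonzero solution $w=\sum_{i=1}^{k}c_i u_i$. This $w$ is orthogonal to the first $k-1$ eigenspaces and has Rayleigh quotient $\lambda_k(\Omega)$, so it realises the minimum in the variational characterisation of $\lambda_k(\Omega)$. A standard computation of the associated Euler--Lagrange equation (using $\int_\Omega\nabla w\cdot\nabla\phi_j=\lambda_j(\Omega)\int_\Omega w\,\phi_j=0$ to discard the Lagrange multipliers) then shows that $w$ is itself an eigenfunction associated with $\lambda_k(\Omega)$, i.e.\ $-\Delta w=\lambda_k(\Omega)w$ on $\Omega$.

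The last step closes the argument by unique continuation: $w$ is real-analytic on $\Omega$, yet it vanishes identically on the nonempty open set $D_{k+1}$ because $c_{k+1}=0$; since $\Omega$ is connected this forces $w\equiv0$, contradicting $w\ne0$. I expect this final step to be the main obstacle, since it rests on the two genuinely analytic inputs of the proof: that the constrained minimiser $w$ is a true eigenfunction (with no spurious contribution from the lower eigenspaces), and that eigenfunctions of $-\Delta$ possess the unique continuation property. Everything else reduces to elementary linear algebra and integration by parts.
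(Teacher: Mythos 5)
Your proof is correct, and it is the classical argument for Courant's theorem; note that the paper itself states this 1923 result purely as background, with no proof at all, so there is nothing internal to compare against --- what you have reconstructed is the standard Courant--Hilbert proof. Two points deserve to be flagged, since they carry the real analytic weight and are exactly where a referee would ask for detail. First, the membership $u\mathbf{1}_{D_i}\in H^1_0(\Omega)$ is a genuine (if standard) lemma rather than an immediate consequence of continuity: nodal sets need not be smooth, and the usual justification approximates $u\mathbf{1}_{D_i}$ by $(u-\eps)_+\mathbf{1}_{D_i}$ (on a nodal domain where $u>0$) and passes to the limit in $H^1$. Second, connectedness of $\Omega$ is indispensable in your final unique-continuation step, and the statement is actually false without it: if $\Omega$ is the disjoint union of two congruent domains, the eigenfunction equal to the positive ground state on each component has eigenvalue $\lambda_1(\Omega)$ but two nodal domains. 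The paper's standing hypothesis (``bounded open set'') does not state connectedness explicitly, so you are right to invoke it as an assumption rather than treat it as automatic. The remaining steps --- disjoint supports giving $\dim V=k+1$ and the Rayleigh quotient being identically $\lambda_k(\Omega)$ on $V$, the dimension count producing a nonzero $w$ with $c_{k+1}=0$ orthogonal to the first $k-1$ eigenfunctions, and the fact that an admissible function attaining the minimum $\lambda_k(\Omega)$ must itself be an eigenfunction (your Lagrange-multiplier computation, or equivalently expanding $w$ in the eigenbasis and noting that all coefficients along eigenvalues strictly greater than $\lambda_k(\Omega)$ must vanish) --- are elementary and correctly carried out.
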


\begin{thm}[Courant-sharp characterization]
	\label{thmCourantSharp}
	The nodal partition associated with the eigenfunction $u$ is minimal if, and only if, $u$ is Courant-sharp, that is to say associated with $\lambda_k(\Omega)$, where $k=\nu(u)$.
\end{thm}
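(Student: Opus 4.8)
The plan is to peel off the two implications around a single variational inequality, namely that every $k$-partition $\mathcal{D}=(D_1,\dots,D_k)$ satisfies $\Lambda_k(\mathcal{D})\ge\lambda_k(\Omega)$, and hence $\mathfrak{L}_k(\Omega)\ge\lambda_k(\Omega)$. First I would prove this. Let $\phi_i\in H^1_0(D_i)$ be a ground state of $D_i$, extended by zero to an element of $H^1_0(\Omega)$; the $\phi_i$ have pairwise disjoint supports, so they are independent and, for $v=\sum_i c_i\phi_i$, both $\int_\Omega|\nabla v|^2$ and $\int_\Omega v^2$ split as sums over $i$. The Rayleigh quotient of $v$ is then a convex combination of the numbers $\lambda_1(D_i)$, bounded by $\max_i\lambda_1(D_i)=\Lambda_k(\mathcal{D})$. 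Plugging the $k$-dimensional space $\mathrm{span}(\phi_1,\dots,\phi_k)$ into the Courant--Fischer formula for $\lambda_k(\Omega)$ gives $\lambda_k(\Omega)\le\Lambda_k(\mathcal{D})$, and taking the infimum over partitions yields the inequality.

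Next I would record the identity for a nodal partition: if $u$ is an eigenfunction for $\lambda$ and $k=\nu(u)$, then on each nodal domain $D_i$ the restriction $u|_{D_i}$ is a sign-definite eigenfunction of $D_i$, hence a ground state, so $\lambda_1(D_i)=\lambda$ and therefore $\Lambda_k(\mathcal{D}_u)=\lambda$. The easy implication is then immediate: if $u$ is Courant-sharp, i.e. $\lambda=\lambda_k(\Omega)$, the chain $\lambda_k(\Omega)\le\mathfrak{L}_k(\Omega)\le\Lambda_k(\mathcal{D}_u)=\lambda=\lambda_k(\Omega)$ collapses to equalities, so $\mathfrak{L}_k(\Omega)=\Lambda_k(\mathcal{D}_u)$ and $\mathcal{D}_u$ is minimal.

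For the converse, assume $\mathcal{D}_u$ is minimal, so $\mathfrak{L}_k(\Omega)=\lambda$, and combine this with the lower bound to get $\lambda_k(\Omega)\le\lambda$; the whole difficulty is the reverse inequality $\lambda\le\lambda_k(\Omega)$. Writing $n$ for the first index with $\lambda_n(\Omega)=\lambda$, this reverse inequality is equivalent to $k\ge n$, i.e. to $\nu(u)$ reaching the bottom of the spectral block of $\lambda$. Courant's theorem supplies the opposite estimate $\nu(u)\le n$ (test the min-max at level $n$ against the sign-definite pieces of $u$, then use unique continuation), so $u$ is Courant-sharp exactly when Courant's bound is attained, $k=n$. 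Thus the task reduces to ruling out $k<n$ under the minimality hypothesis, equivalently to showing $\mathfrak{L}_k(\Omega)=\lambda_k(\Omega)$ whenever the minimal $k$-partition is nodal.

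This last reduction is where I expect the main obstacle. The strict inequality $\lambda_k(\Omega)<\mathfrak{L}_k(\Omega)$ that $k<n$ would produce only says the variational lower bound is not attained; converting it into an actual $k$-partition of energy below $\lambda$ is not automatic, since $\mathfrak{L}_k(\Omega)\le\lambda_k(\Omega)$ is false in general and one cannot partition directly with the low-lying eigenfunctions $u_1,\dots,u_k$. My plan is to use that $\mathcal{D}_u$ is nodal, hence bipartite, so that the signed combination of the $\phi_i$ is a genuine eigenfunction, together with the regularity and equal-angle structure of minimal partitions (existence-and-regularity theorem, subpartition property, and the pair compatibility condition of Corollary~\ref{corPCC}): a nodal minimal partition whose eigenfunction overshoots its spectral rank should admit a local modification of $N(\mathcal{D}_u)$ that strictly lowers $\max_i\lambda_1(D_i)$, contradicting minimality. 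Making this competitor rigorous --- equivalently proving the equality $\mathfrak{L}_k(\Omega)=\lambda_k(\Omega)$ for nodal minimizers through the attainment analysis of the min-max --- is the crux and the step I would invest the most care in.
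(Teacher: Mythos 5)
First, a point of comparison: the paper does not prove this theorem at all --- it is quoted from Helffer, Hoffmann-Ostenhof and Terracini \cite{HelHofTer09} --- so your proposal can only be measured against the proof in that reference. Your first three steps are correct, complete, and are exactly the standard ones: the min-max inequality $\lambda_k(\Omega)\le\Lambda_k(\mathcal{D})$ for an arbitrary $k$-partition (the ground states of the $D_i$, extended by zero, have pairwise disjoint supports, so their span is an admissible $k$-dimensional test space); the identity $\Lambda_k(\mathcal{D}_u)=\lambda$ for a nodal partition; hence the implication ``Courant-sharp $\Rightarrow$ minimal''. Your reduction of the converse, via the refined Courant bound $\nu(u)\le n$ (with $n$ the minimal index of the eigenvalue $\lambda$ of $u$), to excluding $k<n$ is also correct.

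The converse itself, however, is not proved: you say yourself that making the competitor rigorous ``is the crux'', so what you offer is a plan rather than a proof, and the plan as formulated cannot succeed. A nodal partition is an equipartition ($\lambda_1(D_i)=\lambda$ for every $i$) whose interfaces carry matching normal derivatives of $u$. By Hadamard's formula, displacing an interface $\Gamma$ between $D_i$ and $D_j$ with normal velocity $w$ changes $\lambda_1(D_i)$ and $\lambda_1(D_j)$ at first order by a negative, respectively positive, multiple of $\int_\Gamma|\nabla u|^2\,w\,ds$; these variations have opposite signs, so $\max_i\lambda_1(D_i)$ can never strictly decrease to first order. In other words, nodal partitions are critical points of the energy, and ``local modifications of $N(\mathcal{D}_u)$'' are precisely the competitors that cannot beat them. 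Moreover, the local necessary conditions you intend to combine (regularity, pair compatibility, subpartition of pairs) are incapable of detecting non-minimality: the paper's own model problem gives a counterexample. For $k$ even and $2/k<b<4/(\sqrt{3}\,k)$, the strip partition $\mathcal{D}_k(1,b)$ of $\Tor(1,b)$ is the nodal partition of $(x,y)\mapsto\sin(k\pi x)$, and every pair of neighbours is a genuine minimal $2$-partition of the corresponding double strip, so the pair compatibility condition of Corollary \ref{corPCC} holds; yet $\mathcal{D}_k(1,b)$ is not minimal, since $b_k=2/k$ by Theorem \ref{thmTrans} (consistently, $u$ is no longer Courant-sharp there, because for $b>2/k$ the eigenvalue $4\pi^2/b^2$ drops below $k^2\pi^2$). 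Minimality of a nodal partition is thus a global spectral property; the actual proof of the converse (Theorem 1.17 in \cite{HelHofTer09}) must, and does, extract from the failure of Courant-sharpness a competitor of a different topological type --- on the torus above, a partition not deformable into the strips --- and this global construction is exactly the content your outline is missing.
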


In particular, a  minimal $2$-partition is always the nodal  partition associated with a second eigenfunction. Theorem \ref{thmCourantSharp} allows one to give explicit examples of minimal partitions, in domains $\Omega$ for which the eigenvalues and eigenfunctions of the Laplacian are explicitly known: see for instance \cite{HelHofTer09,BonHelHof09}. Combined with topogical arguments and covering surface, it can also be use to produce example of non-nodal minimal partitions \cite{HelHofTer10a,HelHof14,SoaveTerracini2015}. Let us add that while minimal partitions are in general not nodal for the Dirichlet Laplacian \cite[Corollary 7.8]{HelHofTer09}, they are always nodal for a magnetic Laplacian, with a suitable magnetic potential of Aharonov-Bohm type, as was proved by B. Helffer and T. Hoffmann-Ostenhof \cite{HelHof13a} (see also \cite{NorTer10,BonHelHof09,HelHof15Review}).

\section{Transitions for the flat torus}
\label{secTrans}
\subsection{Statement of the problem}
Let us now describe our model problem. We consider the flat rectangular torus of length $a$ and width $b$: $\Tor(a,b)= \left(\RR/a\ZZ\right)\times \left(\RR/b\ZZ\right).$ The set of its eigenvalue is $\{\lambda_{m,n}(a,b)\,;\,(m,n)\in\NN_0^2\}$, with
\[\lambda_{m,n}(a,b)=4\pi^2\left(\frac{m^2}{a^2}+\frac{n^2}{b^2}\right),\]
and a corresponding basis of eigenfunctions is given by 
\[u_{m,n}^{a,b}(x,y)=\varphi\left(\frac{2m\pi x }a\right)\psi\left(\frac{2n\pi y}b\right),\] 
where $\varphi, \psi \in \{\cos,\sin\}$.

We first consider the partition of  $\Tor(a,b)$ into $k$ equal vertical strips: $\mathcal{D}_{k}(a,b)=(D_i)_{1\le i \le k}$, with
\[D_i=\left(\frac{i-1}{k}a,\frac{i}{k}a\right)\times\left(0,b\right).\]
Its energy is $\Lambda_k(\mathcal{D}_{k}(a,b))=k^2\pi^2/a^2$. We investigate the following question: for which values of $b\in(0,1]$ is $\mathcal D_k(1,b)$ a minimal partition of $\Tor(1,b)$. More specifically, let us define the transition value
\[ b_k=\sup\{b\in (0,1]~;~\mathcal{D}_{k}(1,b)\mbox{ is a minimal $k$-partition of } \Tor(1,b)\}.\]
The following result justifies the term \emph{transition value} (see \cite[Proposition 2.1]{BonnaillieNoelLena2016ExpMath}).
\begin{prop}
	The partition $\mathcal{D}_k(1,b)$ is minimal for all $b\in (0,b_k]$.
\end{prop}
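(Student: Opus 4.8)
The plan is to reduce the statement to a monotonicity property of the minimal energy $b\mapsto\mathfrak{L}_k(\Tor(1,b))$. The starting point is that the strip partition has energy $\Lambda_k(\mathcal{D}_k(1,b))=k^2\pi^2$ for \emph{every} $b$: up to a set of zero capacity each strip $D_i$ is a flat cylinder of width $1/k$ and circumference $b$, whose first Dirichlet eigenvalue equals $k^2\pi^2$ independently of $b$. Hence $\mathfrak{L}_k(\Tor(1,b))\le k^2\pi^2$ for all $b\in(0,1]$, and $\mathcal{D}_k(1,b)$ is minimal if and only if $\mathfrak{L}_k(\Tor(1,b))=k^2\pi^2$. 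Setting $S=\{b\in(0,1]:\mathfrak{L}_k(\Tor(1,b))=k^2\pi^2\}$, so that $b_k=\sup S$, the proposition reduces to showing that the nonempty set $S$ is both downward closed and closed, whence $S=(0,b_k]$.

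To see that $S$ is downward closed I would prove that $\mathfrak{L}_k(\Tor(1,b))$ is non-increasing in $b$. Fix $b'<b$ and consider the anisotropic dilation $\Phi(x,y)=(x,(b/b')y)$, a diffeomorphism from $\Tor(1,b')$ onto $\Tor(1,b)$ that carries any $k$-partition to a $k$-partition. For $u\in H^1_0(D)$ on a subdomain $D\subset\Tor(1,b')$ and its pushforward $v=u\circ\Phi^{-1}$, the change of variables $Y=(b/b')y$ yields
\[
\frac{\int_{\Phi(D)}|\nabla v|^2}{\int_{\Phi(D)}v^2}
=\frac{\int_D (\partial_x u)^2+(b'/b)^2\int_D(\partial_y u)^2}{\int_D u^2}
\le\frac{\int_D|\nabla u|^2}{\int_D u^2},
\]
the inequality holding because $b'/b<1$. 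Choosing $u$ to be a first eigenfunction of $D$ gives $\lambda_1(\Phi(D))\le\lambda_1(D)$; taking the maximum over the cells of an arbitrary partition $\mathcal{D}'$ of $\Tor(1,b')$ and then the infimum over $\mathcal{D}'$ shows $\mathfrak{L}_k(\Tor(1,b))\le\mathfrak{L}_k(\Tor(1,b'))$. Thus if $b\in S$ and $b'<b$, then $k^2\pi^2=\mathfrak{L}_k(\Tor(1,b))\le\mathfrak{L}_k(\Tor(1,b'))\le k^2\pi^2$, so $b'\in S$.

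For closedness --- equivalently, for the delicate point that the supremum $b_k$ is attained --- I would complement the dilation estimate by its reverse. Compressing $\Tor(1,b)$ into $\Tor(1,b')$ by $(x,y)\mapsto(x,(b'/b)y)$, the same computation (now with the factor $(b/b')^2>1$) gives $\mathfrak{L}_k(\Tor(1,b'))\le(b/b')^2\,\mathfrak{L}_k(\Tor(1,b))$, so that
\[
\mathfrak{L}_k(\Tor(1,b))\le\mathfrak{L}_k(\Tor(1,b'))\le(b/b')^2\,\mathfrak{L}_k(\Tor(1,b)).
\]
Letting $b'\to b$ forces $b\mapsto\mathfrak{L}_k(\Tor(1,b))$ to be continuous, so $S$ is closed, being the set where the continuous function $\mathfrak{L}_k(\Tor(1,\cdot))$ attains its pointwise upper bound $k^2\pi^2$. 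Combining downward-closedness, closedness and $S\neq\emptyset$ yields $S=(0,b_k]$, which is the assertion. The monotonicity step is routine once the dilation is set up; I expect the main obstacle to be precisely the endpoint $b=b_k$, which is the reason the two-sided (and not merely one-sided) dilation comparison is needed.
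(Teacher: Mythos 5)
Your proof is correct and is essentially the argument behind \cite[Proposition 2.1]{BonnaillieNoelLena2016ExpMath}, which is the proof this review defers to: the anisotropic dilation $(x,y)\mapsto(x,(b/b')y)$ yields the monotonicity of $b\mapsto\mathfrak{L}_k(\Tor(1,b))$, hence downward closedness of $S$, while the reverse (compression) estimate with the factor $(b/b')^2$ gives the continuity needed to capture the endpoint $b=b_k$. Two small points: the strips are genuinely cylinders $\left(\frac{i-1}{k},\frac{i}{k}\right)\times(\RR/b\ZZ)$, so the aside ``up to a set of zero capacity'' should be deleted (a segment has \emph{positive} capacity in the plane; the stated energy $k^2\pi^2$ already presupposes the cylinder reading), and the nonemptiness of $S$ that you assert without argument is exactly what Theorem~\ref{thmTrans} supplies (otherwise $b_k$ would be ill defined, or the statement vacuous).
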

We want to localize as precisely as possible this transition value. Let us first recall a result of Helffer and Hoffmann-Ostenhof \cite{HelHof14}.
\begin{thm}
	\label{thmTrans}
	If $k$ is even, $b_k=2/k$. If $k$ is odd, $b_k \ge 1/k$.
\end{thm}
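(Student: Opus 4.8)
The plan is to treat the two parities separately, the dividing line being whether the adjacency graph of the strip partition $\mathcal D_k(1,b)$ is bipartite. Since $\Tor(1,b)$ is a torus, the strips $D_1,\dots,D_k$ are arranged cyclically (the last strip meets the first across $x=0\equiv 1$), so their adjacency graph is the cycle $C_k$, which is bipartite exactly when $k$ is even. By the characterization of nodal minimal partitions quoted above, this is precisely the obstruction to $\mathcal D_k(1,b)$ being nodal. In both cases the starting point is identical: each strip has first Dirichlet eigenvalue $\lambda_1(D_i)=k^2\pi^2$, attained by an eigenfunction constant in $y$, so $\Lambda_k(\mathcal D_k(1,b))=k^2\pi^2$ and hence $\mathfrak L_k(\Tor(1,b))\le k^2\pi^2$ for every $b$. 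Everything reduces to deciding when this value is optimal.

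For $k$ even I would argue entirely through Theorem \ref{thmCourantSharp}. The function $u(x,y)=\sin(k\pi x)$ is well defined on $\Tor(1,b)$ precisely because $k$ is even, it is an eigenfunction for $\lambda_{k/2,0}(1,b)=k^2\pi^2$, and its zero set is exactly $\{x=i/k\}$, so its nodal partition is $\mathcal D_k(1,b)$ and $\nu(u)=k$. It then suffices to locate $k^2\pi^2$ in the spectrum, which is controlled by $\lambda_{0,1}(1,b)=4\pi^2/b^2$. When $b\le 2/k$ one has $\lambda_{0,1}\ge k^2\pi^2$, the only eigenvalues $\le k^2\pi^2$ are the $\lambda_{j,0}=4\pi^2 j^2$ with $0\le j\le k/2$, and this count places the value $k^2\pi^2$ at index $\lambda_k$; thus $u$ is Courant-sharp and $\mathcal D_k(1,b)$ is minimal. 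When $b>2/k$ the doubly degenerate mode $\lambda_{0,1}$ drops strictly below $k^2\pi^2$, so $u$ is associated with some $\lambda_m$, $m\ge k+2>\nu(u)$, and is no longer Courant-sharp; since $\mathcal D_k(1,b)$ is the nodal partition of $u$, Theorem \ref{thmCourantSharp} forbids it from being minimal. This gives $b_k\le 2/k$, hence $b_k=2/k$.

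For $k$ odd the strip partition is not bipartite, hence never nodal, so the Courant-sharp criterion is unavailable and I would instead prove the lower bound $\mathfrak L_k(\Tor(1,b))\ge k^2\pi^2$ for $b\le 1/k$ directly, via a geometric dichotomy on the domains of an arbitrary $k$-partition $\mathcal D=(D_i)$. First, if some $D_i$ does not wind around the $y$-circle, a horizontal circle $\{y=d\}$ misses it, so $D_i\subset \RR/\ZZ\times(d,d+b)$ and domain monotonicity gives $\lambda_1(D_i)\ge \pi^2/b^2\ge k^2\pi^2$; that partition already has energy $\ge k^2\pi^2$. Otherwise every $D_i$ winds around $y$, hence meets every horizontal circle $L_d=\{y=d\}$, so on each $L_d\cong\RR/\ZZ$ the traces $D_i\cap L_d$ are $k$ disjoint nonempty open sets, whence $\sum_i w_i(d)\le 1$ with $w_i(d):=|D_i\cap L_d|$. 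Testing the first eigenfunction $\phi_i$ of $D_i$ against the one-dimensional Dirichlet inequality on each slice, $\int_{L_d}|\partial_x\phi_i|^2\,dx\ge (\pi^2/w_i(d)^2)\int_{L_d}\phi_i^2\,dx$, and integrating in $d$, I would extract a domain whose effective $x$-width is $\le 1/k$ and therefore has $\lambda_1\ge k^2\pi^2$. With the matching upper bound this yields $\mathfrak L_k=k^2\pi^2$ on $(0,1/k]$, i.e.\ $b_k\ge 1/k$.

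The main obstacle is the last step in the odd case: when all domains wind around $y$, the constraint $\sum_i w_i(d)\le 1$ forces some slice-width below $1/k$ at each height $d$, but the domain realizing this can change with $d$ (the ``brick-wall'' configurations), so no single $D_i$ is uniformly narrow. Converting the pointwise-in-$d$ width bound into a genuine lower bound on some $\lambda_1(D_i)$ is exactly where the weighting by $\int_{L_d}\phi_i^2\,dx$ must be controlled, and it is this competition that pins the threshold at $1/k$ rather than $2/k$; indeed beyond $1/k$ such staggered partitions are expected to beat the strips, which is why for odd $k$ only the inequality $b_k\ge 1/k$ is asserted. I would handle this either by invoking the regularity of minimal partitions to straighten the interfaces and reduce the winding case to $k$ cyclically ordered sub-cylinders of total $x$-width $1$, or by a covering-space comparison lifting $\mathcal D$ to $\Tor(2,b)$, where the even result already applies to the $2k$ lifted strips.
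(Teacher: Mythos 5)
First, a remark on the ground truth: the paper does not prove Theorem \ref{thmTrans} at all; it is quoted from Helffer--Hoffmann-Ostenhof \cite{HelHof14}. So your attempt has to be measured against that reference. Your even case is correct and is essentially their argument: $u(x,y)=\sin(k\pi x)$ is well defined exactly when $k$ is even, there are $k-1$ eigenvalues strictly below $k^2\pi^2$ when $b\le 2/k$ (so $u$ is Courant-sharp and Theorem \ref{thmCourantSharp} gives minimality), while for $b>2/k$ the two extra modes coming from $\lambda_{0,1}=4\pi^2/b^2$ push the index of $k^2\pi^2$ to at least $k+2$, and the ``only if'' direction of Theorem \ref{thmCourantSharp} kills minimality of the strips. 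That half is complete.

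The odd case, however, has a genuine gap --- one you half-admit yourself. Three concrete points. (i) Your dichotomy is misstated: ``$D_i$ does not wind around the $y$-circle'' does not imply ``$D_i$ misses a horizontal circle''; a thin neighborhood of a non-closed arc climbing once around the $y$-direction meets every circle $\{y=d\}$ yet contains no non-contractible loop. This is repairable (phrase the dichotomy directly as misses/meets every horizontal circle), and the ``misses'' branch is then fine: $D_i$ sits in a cylinder of height $b\le 1/k$, so $\lambda_1(D_i)\ge\pi^2/b^2\ge k^2\pi^2$. (ii) The ``meets'' branch is where the theorem actually lives, and your slicing argument cannot close it: for each $d$ you only get $\min_i w_i(d)\le 1/k$, the minimizing index varies with $d$, and your lower bound for $\lambda_1(D_i)$ is a weighted average of $\pi^2/w_i(d)^2$ with weights $\int_{L_d}\phi_i^2\,dx$ which concentrate precisely on the slices where $D_i$ is wide; staggered (brick-wall) configurations show such an average can sit strictly below $\pi^2 k^2$ even though every slice contains a narrow trace. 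Naming the obstacle is not overcoming it. (iii) Neither proposed repair works as stated. Regularity of minimal partitions does not permit ``straightening interfaces''. And the lift to $\Tor(2,b)$ of a $k$-partition is a $2k$-partition only when no domain contains a loop of odd winding in $x$: such a domain lifts to a single connected set (with the same first eigenvalue, since the pulled-back positive ground state remains a ground state), so the lifted partition may have $m<2k$ domains, and the only inequality available, $\Lambda_k(\mathcal D)\ge \mathfrak{L}_m(\Tor(2,b))$, is weaker than the needed $\mathfrak{L}_{2k}(\Tor(2,b))$ because $\mathfrak{L}_m\le\mathfrak{L}_{2k}$. Excluding or handling those winding domains is exactly the content of the odd case in \cite{HelHof14} (and of the alternative route $b_k\ge \bS_k$ of \cite{BonnaillieNoelLena2016ExpMath}, sharpened in Proposition \ref{propbksLower}), and it is the piece your proposal does not supply.
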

We want to improve the lower bound when $k$ is odd. This can be done by considering the following auxiliary optimization problem. For $b\in (0,1]\,$, we consider the infinite strip $\Str_b=\RR\times \left(0,b\right)\,$ and we define
\begin{equation*}
\bS_k =\sup\left\{b\in(0,1]\,;\, j(b)>k^2\pi^2\right\},
\mbox{ with }
j(b)=\inf_{\Omega\subset \Str_b, |\Omega|\le b}\lambda_1(\Omega).
\end{equation*}
As seen in \cite[Theorem 1.9]{BonnaillieNoelLena2016ExpMath}, $b_k \ge \bS_k$ if $k$ is odd. The following estimate gives a quantitative improvement of Theorem \ref{thmTrans} and of \cite[Theorem 1.9]{BonnaillieNoelLena2016ExpMath}
\begin{prop}
	\label{propbksLower}
	For any integer $k\ge 2$, $1/\sqrt{k^2-1/8}\le\bS_k<1/\sqrt{k^2-1}$.
\end{prop}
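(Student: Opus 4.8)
The plan is to reduce both inequalities to pointwise bounds on the function $b\mapsto j(b)-\pi^2/b^2$. Concretely, I would establish that for every $b\in(0,1]$
\begin{equation*}
\frac{\pi^2}{b^2}+\frac{\pi^2}{8}\ \le\ j(b)\ <\ \frac{\pi^2}{b^2}+\pi^2,
\end{equation*}
the left inequality being the new content and the right one being strict. Granting this together with the (standard) continuity of $b\mapsto j(b)$, both bounds follow from the definition of $\bS_k$. Indeed, if $b<1/\sqrt{k^2-1/8}$ then $1/b^2>k^2-1/8$, so $j(b)\ge\pi^2(1/b^2+1/8)>k^2\pi^2$, whence the interval $(0,1/\sqrt{k^2-1/8})$ lies in the defining set and $\bS_k\ge 1/\sqrt{k^2-1/8}$; and at $b_+=1/\sqrt{k^2-1}$ the strict upper bound gives $j(b_+)<\pi^2(1/b_+^2+1)=k^2\pi^2$, so by continuity $j<k^2\pi^2$ on a left neighbourhood of $b_+$, forcing $\bS_k<b_+$. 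Note that for $k\ge2$ both values lie in $(0,1]$, so the reduction is legitimate.

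For the upper bound I would test $j(b)$ with the full cross-section rectangle $R=(0,1)\times(0,b)$, which has area $b$ and $\lambda_1(R)=\pi^2(1+1/b^2)$, giving $j(b)\le\pi^2/b^2+\pi^2$. Strictness comes from the fact that $R$ is not optimal: its eigenfunction is $\sin(\pi x)\sin(\pi y/b)$, so on the vertical sides $|\partial_n u|^2$ is proportional to $\sin^2(\pi y/b)$, and Hadamard's formula for a normal perturbation $V_n(y)$ of those sides reads $\mathrm d\lambda_1=-\int_0^b|\partial_n u|^2\,V_n\,\mathrm dy$. Choosing $V_n(y)\propto\sin^2(\pi y/b)-\tfrac12$ keeps the area fixed to first order (its mean vanishes) while making $\mathrm d\lambda_1<0$; thus pushing the sides out near $y=b/2$ and in near $y=0,b$ yields a competitor of the same area with strictly smaller first eigenvalue, so $j(b)<\pi^2/b^2+\pi^2$.

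The lower bound is the heart of the matter. First I would Steiner-symmetrize $\Omega$ in the $x$-direction: this does not increase $\lambda_1$, preserves the area, and keeps $\Omega\subset\Str_b$, so it suffices to treat domains $\Omega=\{(x,y):0<y<b,\ |x|<\ell(y)/2\}$ with $\int_0^b\ell(y)\,\mathrm dy\le b$. I would then peel off the transverse ground state via the substitution $u=\sin(\pi y/b)\,w$, which recasts the target estimate $\lambda_1(\Omega)\ge\pi^2/b^2+\pi^2/8$ as the weighted Poincaré inequality
\begin{equation*}
\int_\Omega|\nabla w|^2\sin^2\!\tfrac{\pi y}{b}\,\mathrm dx\,\mathrm dy\ \ge\ \frac{\pi^2}{8}\int_\Omega w^2\sin^2\!\tfrac{\pi y}{b}\,\mathrm dx\,\mathrm dy,
\end{equation*}
where $w$ need only vanish on the lateral boundary $\{|x|=\ell(y)/2\}$. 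The longitudinal contribution is handled slicewise: for fixed $y$ the one-dimensional Dirichlet Poincaré inequality on an interval of length $\ell(y)$ gives $\int_x|\partial_x w|^2\,\mathrm dx\ge(\pi^2/\ell(y)^2)\int_x w^2\,\mathrm dx$, so the left-hand side dominates a $\sin^2(\pi y/b)$-weighted average of $\pi^2/\ell(y)^2$, which the constraint $\int\ell\le b$ makes large unless the weight concentrates at the heights where $\ell$ is large.

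The main obstacle is precisely this concentration. A weight localized on a thin band of heights where $\ell$ is large would defeat the slicewise bound, and ruling it out forces one to keep the transverse term $\int|\partial_y w|^2\sin^2(\pi y/b)$, whose spectral gap is $3\pi^2/b^2$ because the non-constant $y$-modes are costly. Since the optimal domain is lens-shaped rather than a product, the lateral Dirichlet condition depends on $y$ and the longitudinal and transverse energies do not decouple; the crux is to quantify the trade-off between the area budget $\int\ell\le b$ and the transverse energy so as to extract the explicit constant $\pi^2/8$. I expect this balancing step, rather than the symmetrization and substitution that precede it, to be where the genuine difficulty lies, and it is plausible that $1/8$ is a clean but non-sharp constant produced by it.
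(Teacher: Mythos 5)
Your reduction of the proposition to the two-sided pointwise bound $\pi^2/b^2+\pi^2/8\le j(b)<\pi^2/b^2+\pi^2$ is exactly the paper's route (stated there in rescaled form as Proposition \ref{propJBounds}: $\pi^2(1+\tfrac1{8V^2})\le J(V)<\pi^2(1+\tfrac1{V^2})$ for $V\ge\tfrac12$, with $V=1/b$), and your upper bound --- the rectangle plus an area-preserving Hadamard perturbation of its vertical sides --- is a correctly fleshed-out version of the paper's one-line justification. The continuity of $j$ that you invoke without proof is also repairable: $b\mapsto j(b)$ is non-increasing by inclusion of the admissible classes, while $b\mapsto b^2 j(b)$ is non-decreasing by scaling, and these two monotonicities together force continuity. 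The problem is the lower bound, which you yourself call the heart of the matter: your text sets up one symmetrization, the ground-state substitution $u=\sin(\pi y/b)w$, and a slicewise Poincar\'e inequality, and then stops, explicitly deferring ``the trade-off between the area budget $\int\ell\le b$ and the transverse energy'' that would produce the constant $\pi^2/8$. That balancing step is the entire content of the proposition, so the proposal has a genuine gap; moreover your slicing (fixed height $y$, Poincar\'e in the longitudinal variable, potential $\pi^2/\ell(y)^2$) runs head-on into the concentration problem you describe, and coupling it to the transverse kinetic term is never carried out.

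The paper closes precisely this gap by using the area constraint \emph{geometrically, before any spectral estimate}, and by slicing in the opposite direction. After \emph{two} Steiner symmetrizations (with respect to $x_1=0$ and $x_2=\tfrac12$), the domain becomes $\Omega^*=\{|x_2-\tfrac12|<f(x_1)\}$ with $f$ even and non-increasing on $[0,+\infty)$; monotonicity then gives the Chebyshev-type pointwise bound $x_1f(x_1)\le\int_0^{x_1}f\,dt\le V/4$, hence $f(x_1)\le\min\bigl(\tfrac12,V/(4x_1)\bigr)$ and the inclusion $\Omega^*\subset\Cont_V$ (Lemma \ref{lemJLower}). This converts the integral constraint $|\Omega|\le V$ into pointwise decay of the width, which is exactly the quantitative control your approach lacks. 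The Poincar\'e inequality is then applied in the \emph{transverse} direction at each fixed longitudinal coordinate $x_1$, yielding the one-dimensional Schr\"odinger operator $-d^2/dx_1^2+\pi^2/(4g(x_1)^2)$ whose potential $\pi^2+\pi^2\bigl((2x_1/V)^2-1\bigr)_+$ is confining (Lemma \ref{lemL1Lower}); finally, comparison with an explicitly solvable square-well potential gives $\mu_1(h)\ge\pi^2h^2/32$ for $h=2/V\le4$ (Lemma \ref{lemMu1Lower}), i.e. exactly the term $\pi^2/(8V^2)$ for $V\ge\tfrac12$. Your guess that $1/8$ is a clean, non-sharp constant is right, but it comes out of this square-well comparison, not out of the unresolved balancing act in your proposal.
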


As was pointed out to us by Bernard Helffer, the method of covering surfaces in \cite{HelHof14} leads quite naturally to the following conjecture.

\begin{conj}
	\label{conjbk}
	For any odd integer $k\ge 3$, $b_k=2/\sqrt{k^2-1}$.
\end{conj}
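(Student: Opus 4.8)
The plan is to reduce the non-bipartite partition problem on $\Tor(1,b)$ to a Courant-sharp question on the double cover $\Tor(2,b)$, following the covering-surface philosophy of \cite{HelHof14,HelHof13a}. Let $\sigma(x,y)=(x+1,y)$ be the deck transformation of the double cover $\Tor(2,b)\to\Tor(1,b)$, and let $V^-=\{u\in L^2(\Tor(2,b))\,;\,u\circ\sigma=-u\}$ be the space of $\sigma$-antisymmetric functions. Since $k$ is odd, $\mathcal D_k(1,b)$ is not bipartite, so by the covering method a minimal $k$-partition should lift to a $\sigma$-antisymmetric nodal $2k$-partition of $\Tor(2,b)$, associated with an eigenfunction of $-\Delta$ restricted to $V^-$. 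The strip candidate corresponds to $u^{2,b}_{k,0}=\cos(\pi k x)$: it lies in $V^-$ because $k$ is odd, has eigenvalue $\lambda_{k,0}(2,b)=\pi^2k^2=\Lambda_k(\mathcal D_k(1,b))$, its $2k$ nodal strips on the cover project to the $k$ strips of $\mathcal D_k(1,b)$, and a $\sigma$-antisymmetric eigenfunction has its nodal domains exchanged in pairs by the fixed-point-free involution $\sigma$, so the projected partition has exactly $\tfrac12\nu(u)$ cells.

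The next step is eigenvalue bookkeeping in $V^-$. The eigenvalues of $-\Delta|_{V^-}$ are the $\lambda_{m,n}(2,b)=\pi^2m^2+4\pi^2n^2/b^2$ with $m$ odd. For $b<2/\sqrt{k^2-1}$ one has $4/b^2>k^2-1$, so any mode with $n\neq0$ satisfies $\lambda_{m,n}(2,b)\ge\pi^2+4\pi^2/b^2>\pi^2k^2$; hence the only eigenvalues $\le\pi^2k^2$ come from $(m,0)$ with $m\in\{1,3,\dots,k\}$, and counting multiplicities $\pi^2k^2$ first appears with rank $k$. Thus $\cos(\pi kx)$ realizes the $k$-th eigenvalue of $-\Delta|_{V^-}$ with $\tfrac12\nu=k$ nodal cells on the base, i.e.\ it is Courant-sharp for the antisymmetric problem. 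The transition is governed by the first competitor to descend below $\pi^2k^2$: minimizing $2|n|/\sqrt{k^2-m^2}$ over odd $m<k$ and $n\ge1$ selects $(m,n)=(1,\pm1)$, whose eigenvalue $\pi^2(1+4/b^2)$ crosses $\pi^2k^2$ exactly at $b=2/\sqrt{k^2-1}$. This is the source of the conjectured value.

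From here the two inequalities would follow in opposite directions. For $b\le2/\sqrt{k^2-1}$ I would invoke the converse of the covering/magnetic Courant-sharp criterion --- the analogue of Theorem \ref{thmCourantSharp} for $-\Delta|_{V^-}$ --- to deduce that the $\sigma$-antisymmetric nodal partition of $\cos(\pi kx)$ is a minimal antisymmetric $2k$-partition of $\Tor(2,b)$, hence that its projection $\mathcal D_k(1,b)$ is a minimal $k$-partition of $\Tor(1,b)$; this gives $b_k\ge2/\sqrt{k^2-1}$. For $b>2/\sqrt{k^2-1}$ the four modes $u^{2,b}_{1,\pm1}$ drop below $\pi^2k^2$, so the $k$-th eigenvalue $\mu_k(b)$ of $-\Delta|_{V^-}$ satisfies $\mu_k(b)<\pi^2k^2$; I would then produce, inside the nearly degenerate space spanned by the $(k,0)$ and $(1,\pm1)$ modes, a real $\sigma$-antisymmetric eigensection at level $\mu_k(b)$ whose base nodal set cuts $\Tor(1,b)$ into exactly $k$ connected cells. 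This yields a $k$-partition of energy $\mu_k(b)<\pi^2k^2$, so $\mathcal D_k(1,b)$ is not minimal and $b_k\le2/\sqrt{k^2-1}$, closing the equality.

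Two points carry the real difficulty. For the upper bound one must verify that the competing eigensection built from the $(k,0)$ and $(1,\pm1)$ modes has base nodal set producing precisely $k$ cells (and not fewer, which would only give a partition into too few pieces and require an additional subdivision argument); this is a concrete but delicate nodal-set computation for an explicit trigonometric combination. The genuine obstacle, however, is the lower bound: the covering method of \cite{HelHof14,HelHof13a} shows that a minimal partition is nodal for a suitable antisymmetric or Aharonov-Bohm structure, but the converse statement I need --- that Courant-sharpness of $\cos(\pi kx)$ in $V^-$ forces $\mathcal D_k(1,b)$ to beat \emph{every} competing $k$-partition, including bipartite partitions and partitions whose odd topology is carried by a different covering or by poles placed elsewhere --- is not a clean consequence of the existing theory. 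Establishing the required lower bound $\mathfrak L_k(\Tor(1,b))\ge\mu_k(b)$ uniformly over admissible competitors is exactly the gap that keeps the statement a conjecture, and is where I would expect to spend most of the effort, presumably by combining the subpartition property and the pair compatibility condition of Corollary \ref{corPCC} with a symmetrization argument to exclude non-strip competitors.
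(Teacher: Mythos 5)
The statement you are trying to prove is labelled a \emph{conjecture} in the paper, and the paper offers no proof of it; indeed it states explicitly that its best available lower-bound technique falls strictly short and that ``new ideas would be needed.'' Your proposal, to its credit, ends by conceding exactly this, so let me confirm that your self-diagnosis is correct and sharpen it. The upper bound half of your plan is essentially the known result: $b_k\le 2/\sqrt{k^2-1}$ is \cite[Proposition 2.8]{BonnaillieNoelLena2016ExpMath}, and your eigenvalue bookkeeping in $V^-$ (the $(1,\pm1)$ modes crossing $\pi^2k^2$ at $b=2/\sqrt{k^2-1}$) is the correct mechanism behind it. However, even there your route is shakier than the established one: you propose to exhibit an explicit competitor of energy $\mu_k(b)<\pi^2k^2$ built from the $(k,0)$ and $(1,\pm1)$ modes, but just above the transition the low antisymmetric eigenfunctions coming from the $(1,\pm1)$ modes have far fewer than $2k$ nodal domains on the cover, so the projected partition has fewer than $k$ cells, and subdividing cells can only \emph{raise} first eigenvalues --- it cannot preserve the energy bound. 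The known proof instead uses the necessary direction of the covering argument (a Courant-sharp-type condition in the antisymmetric spectrum that the lifted strip partition violates once $\pi^2 k^2$ has label greater than $k$ in $V^-$), which avoids constructing any competitor.

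The genuine gap is the lower bound $b_k\ge 2/\sqrt{k^2-1}$, and it is not merely ``where you would spend most of the effort'' --- it is the open problem. The step you call ``the converse of the covering/magnetic Courant-sharp criterion'' does not exist in the literature: the theorems of \cite{HelHof14,HelHof13a} say that a minimal non-bipartite partition \emph{is} nodal for a suitable antisymmetric or Aharonov--Bohm structure (a necessary condition), not that Courant-sharpness of $\cos(k\pi x)$ in one fixed space $V^-$ forces minimality over \emph{all} $k$-partitions. Competitors may be bipartite, may lift nontrivially only to one of the other double coverings of the torus (there are three nontrivial ones, plus Aharonov--Bohm structures with poles placed anywhere), and none of these are controlled by the spectrum of $-\Delta|_{V^-}$ on your chosen cover. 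The paper quantifies exactly how far current methods reach: Proposition \ref{propbksLower} gives $b_k\ge \bS_k\ge 1/\sqrt{k^2-1/8}$ via symmetrization on the infinite strip, and simultaneously shows $\bS_k<1/\sqrt{k^2-1}<2/\sqrt{k^2-1}$, so the strip-symmetrization route (which is essentially the ``symmetrization argument to exclude non-strip competitors'' you sketch at the end) provably cannot close the gap. Your proposal is therefore a reasonable reconstruction of why the value $2/\sqrt{k^2-1}$ is conjectured, together with a correct proof sketch of the easy inequality, but it is not, and cannot with these tools become, a proof of the equality.
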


It can actually be proved that $b_k\le 2/\sqrt{k^2-1}$ (see \cite[Proposition 2.8]{BonnaillieNoelLena2016ExpMath}). The conjecture is supported by the numerical study. Proposition \ref{propbksLower} shows that, for any odd integer $k\ge 3$, $\bS_k<2/\sqrt{k^2-1}$. New idea would therefore be needed to prove Conjecture \ref{conjbk}.

\subsection{Proof of Proposition \ref{propbksLower}}

Let us sketch the proof of Proposition \ref{propbksLower}. It is a direct consequence of  the following proposition, after rescaling.
\begin{prop} \label{propJBounds} For $V\ge 1/2$,
	\[\pi^2\left(1+\frac1{8V^2}\right)\le J(V)< \mathcal \pi^2\left(1+\frac1{V^2}\right),\mbox{ where }J(V):=\inf_{\Omega\subset \Str_1, |\Omega|\le V}\lambda_1(\Omega).
	\]
\end{prop}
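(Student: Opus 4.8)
The plan is to prove the two inequalities separately; the lower bound is the substantive part. Throughout, write a generic competitor as $\Omega\subset\Str_1=\RR\times(0,1)$ with $|\Omega|\le V$. Since the first eigenvalue of a disconnected set is the minimum over its components, and each component still has area $\le V$, we may assume $\Omega$ connected. Let $u$ be an $L^2$-normalized ground state, extended by $0$ to $\Str_1$, and set $\ell(x)=|\{y:(x,y)\in\Omega\}|$ and $v(x)=\|u(x,\cdot)\|_{L^2_y}$, so that $\int v^2=1$, $\int\ell\le V$ and $0\le\ell\le1$.

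For the lower bound I would first peel off the transverse energy slice by slice: for each $x$ the slice is contained in a union of intervals of total length $\ell(x)\le1$, so its first Dirichlet eigenvalue is at least $\pi^2/\ell(x)^2$, giving $\int u_y^2\ge\pi^2\int v^2/\ell^2$; Cauchy--Schwarz in $y$ gives $|v'|\le\|u_x(x,\cdot)\|_{L^2_y}$ and hence $\int u_x^2\ge\int(v')^2$. Therefore
\[
\lambda_1(\Omega)-\pi^2\ \ge\ \int (v')^2+\pi^2\int v^2\Big(\tfrac1{\ell^2}-1\Big).
\]
The key elementary step is to linearize the penalty via the AM--GM inequality $\ell^{-2}+2\ell\ge3$, that is $\ell^{-2}-1\ge2(1-\ell)$ for $0<\ell\le1$, bounding the right-hand side below by $\int(v')^2+2\pi^2\int v^2(1-\ell)$.

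Now I would relax and minimize the latter over all feasible $(v,\ell)$ (no longer coupled through $\Omega$). Minimizing $\int v^2(1-\ell)$ over feasible $\ell$ is bang--bang, putting $\ell=1$ on the superlevel set of $v^2$ of measure $V$; combined with symmetric-decreasing rearrangement of $v$ (P\'olya--Szeg\H{o} lowers $\int(v')^2$ and preserves the distribution of $v^2$), this reduces the problem to the one-dimensional ground-state energy
\[
e(V)=\lambda_0\!\left(-\partial_x^2+2\pi^2\,\mathbf 1_{\{|x|>V/2\}}\right),
\]
the bottom of a finite square well of width $V$ and depth $2\pi^2$. It then remains to show $e(V)\ge\pi^2/(8V^2)$ for $V\ge1/2$. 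Writing the even ground state as $\cos(kx)$ inside and a decaying exponential outside, $k=\sqrt{e(V)}$ solves $k\tan(kV/2)=\sqrt{2\pi^2-k^2}$; testing $k_0=\pi/(2\sqrt2\,V)$ gives the $V$-independent phase $k_0V/2=\pi/(4\sqrt2)$, so the left side is $k_0\tan(\pi/(4\sqrt2))=O(1/V)$ while the right side is $\ge\sqrt{3\pi^2/2}$ as soon as $k_0^2\le\pi^2/2$, i.e. for $V\ge1/2$. Since the left side increases and the right side decreases in $k$, the root satisfies $k\ge k_0$, whence $e(V)\ge k_0^2=\pi^2/(8V^2)$. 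I expect this chain of reductions to be the main obstacle: one must check that every inequality points in the direction keeping it a valid lower bound (the rearrangement and the bang--bang $\ell$ are used \emph{inside} the relaxation, so the decoupled infimum only lowers the functional), and it is precisely the threshold $V\ge1/2$ together with the constant $\tfrac18$ that renders the final square-well estimate trivial.

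For the strict upper bound I would exhibit a competitor beating the rectangle $R=(0,V)\times(0,1)$, whose eigenvalue is exactly $\pi^2(1+V^{-2})$. Take the longer rectangle $R_\delta=(0,V+2\delta)\times(0,1)$, with eigenvalue $\pi^2(1+V^{-2})-4\pi^2\delta V^{-3}+O(\delta^2)$, and remove its four corners along right triangles of legs $\sqrt\delta$ so that the area returns to $V$. Using as trial function the eigenfunction $\sin(\pi x/(V+2\delta))\sin(\pi y)$ of $R_\delta$, cut off to vanish on the four short hypotenuses, one checks that both the extra Dirichlet energy produced by the cutoff and the lost $L^2$ mass are $O(\delta^2)$, since near a corner $u=O(xy)$ and the cutoff acts on a region of size $\sqrt\delta$. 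Hence the Rayleigh quotient of the trial function is $\pi^2(1+V^{-2})-4\pi^2\delta V^{-3}+O(\delta^2)$, strictly below $\pi^2(1+V^{-2})$ for $\delta$ small. The only delicate point here is the routine corner estimate confirming that the linear gain from lengthening dominates the quadratic cost of the cutoff.
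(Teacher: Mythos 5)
Your proof is correct (up to the routine details you flag), but it takes a genuinely different route from the paper on both inequalities. For the lower bound, the paper first performs two Steiner symmetrizations of $\Omega$ (in $x_1$ and $x_2$) and uses the bound $x_1 f(x_1)\le \int_0^{\infty}f = V/4$ on the symmetrized width to trap the symmetrized set inside the explicit region $\Cont_V$; fiber-wise Poincar\'e then yields the one-dimensional operator $-d^2/dx_1^2+\pi^2/(4g^2)$, which after rescaling is $\pi^2$ plus the semiclassical operator $P_h=-h^2\,d^2/dt^2+\pi^2(t^2-1)_+$ with $h=2/V$, and only at the very last step is this potential degraded to a square well ($|t|<\sqrt2$, depth $\pi^2$). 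You never symmetrize the domain: you slice the Rayleigh quotient on $\Omega$ itself, linearize the slice penalty by AM--GM ($\ell^{-2}-1\ge 2(1-\ell)$), and then decouple $(v,\ell)$, using the bathtub principle for $\ell$ and one-dimensional P\'olya--Szeg\H{o} for $v$, arriving at a square well of width $V$ and depth $2\pi^2$. The two reductions lose sharpness at different places (your AM--GM step versus the paper's square-well comparison), yet both end with the same kind of transcendental-equation estimate and, remarkably, the same constant $\tfrac18$ under the same threshold $V\ge\tfrac12$; the paper's route keeps the quadratic potential $(t^2-1)_+$ longer and is thus better positioned for sharper asymptotics of $J(V)$ as $V\to\infty$, while yours trades the two-dimensional Steiner machinery for elementary one-dimensional tools. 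For the upper bound, the paper invokes in one line the first-order optimality condition (the rectangle's eigenfunction has non-constant normal derivative on the free boundary, hence the rectangle is not optimal), which implicitly rests on Hadamard shape-derivative theory; your lengthen-and-chop-corners competitor makes that perturbation explicit and quantitative (a gain of order $\delta$ against a cost of order $\delta^2$, because the eigenfunction vanishes like $xy$ at the corners), so it is, if anything, more self-contained. One cosmetic point: a ground state $u$ need not exist on an unbounded $\Omega$ of finite volume, so you should not start from a minimizer; but since every step of your slicing argument is an inequality on the Rayleigh quotient, running it on a normalized $u\in C_c^\infty(\Omega)$ whose Rayleigh quotient is within $\eps$ of $\lambda_1(\Omega)$ gives the identical conclusion.
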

Let us note that in Proposition \ref{propJBounds}, and in the rest of this section, we define $\lambda_1(\Omega)$ for any open set in $\RR^2$, possibly unbounded and of infinite volume, as the infimum of a Rayleigh quotient:
\begin{equation*}
\lambda_1\left(\Omega\right):=\inf_{u\in H^1_0\left(\Omega\right)\setminus\{0\}}\frac{\int_{\Omega}\left|\nabla u\right|^2\,dx}{\int_{\Omega}u^2\,dx}.
\end{equation*}

The upper bound of $J(V)$ is obtained immediately by considering the rectangle  $(0,V)\times (0,1)$, which cannot be minimal, since the normal derivative of the first eigenfunction on its free boundary is not constant. The lower  bound is harder to prove. The first part of the proof relies on a symmetrization argument. For all $V>0$, we define an open subset $\Cont_V$ of $\Str$ by
\begin{equation*}
\Cont_V:=\left\{(x_1,x_1)\in\RR^2\,:\, \left|x_2-\frac12\right|< g(x_1) \right\}\mbox{ with }g(x_1):=\min\left(\frac12,\frac{V}{4x_1}\right).
\end{equation*}

\begin{lem}
	\label{lemJLower}
	For all $V>0$, $J(V)\ge \lambda_1\left(\Cont_V\right)$.
\end{lem}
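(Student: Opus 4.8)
The plan is to prove the pointwise inequality $\lambda_1(\Omega)\ge\lambda_1(\Cont_V)$ for every admissible competitor $\Omega\subset\Str_1$ with $|\Omega|\le V$, and then pass to the infimum defining $J(V)$. The whole argument rests on two orthogonal Steiner symmetrizations followed by a domain-monotonicity comparison, so that the explicit set $\Cont_V$ appears simply as the largest domain consistent with the constraints. A convenient feature is that, since $\lambda_1$ is defined above purely as the infimum of a Rayleigh quotient over $H^1_0$, I can work at the level of test functions via the P\'olya--Szeg\H{o} inequality and thereby avoid imposing any regularity, boundedness, or connectedness on $\Omega$.

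First I would Steiner-symmetrize $\Omega$ in the $x_2$ variable about the midline $\{x_2=1/2\}$: each vertical slice $\{x_1=c\}\cap\Omega$, of length $\ell(c)\le 1$, is replaced by the centred interval of the same length. The resulting set $\Omega_1=\{|x_2-1/2|<h_1(x_1)\}$, with $h_1=\ell/2\le 1/2$, still lies in $\Str_1$ and has the same area, so $\int_{\RR}2h_1\le V$. Taking a minimizing sequence for $\lambda_1(\Omega)$ and symmetrizing each term, the P\'olya--Szeg\H{o} inequality preserves the $L^2$ norm, does not increase the Dirichlet energy, and sends supports into $\Omega_1$; hence $\lambda_1(\Omega_1)\le\lambda_1(\Omega)$. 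Next I would Steiner-symmetrize $\Omega_1$ in the $x_1$ variable about $\{x_1=0\}$. Because each horizontal slice $\{x_2=1/2+t\}$ of $\Omega_1$ is the super-level set $\{h_1>|t|\}$, this replaces the profile $h_1$ by its symmetric decreasing rearrangement $h_2$, producing $\Omega_2=\{|x_2-1/2|<h_2(|x_1|)\}$ with $h_2$ non-increasing on $[0,\infty)$, still bounded by $1/2$, with unchanged area (so $\int_0^\infty h_2\le V/4$), and with $\lambda_1(\Omega_2)\le\lambda_1(\Omega_1)$.

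The key elementary step is then a pointwise bound on the rearranged profile. Since $h_2$ is non-increasing, for every $r>0$ one has $r\,h_2(r)\le\int_0^r h_2\le V/4$, whence $h_2(r)\le V/(4r)$; combined with $h_2\le 1/2$ this gives $h_2(r)\le\min(1/2,V/(4r))=g(r)$. Therefore $\Omega_2\subset\Cont_V$, and domain monotonicity of the first Dirichlet eigenvalue (again immediate from the variational definition, since $H^1_0(\Omega_2)\subset H^1_0(\Cont_V)$) yields $\lambda_1(\Cont_V)\le\lambda_1(\Omega_2)\le\lambda_1(\Omega_1)\le\lambda_1(\Omega)$. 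Taking the infimum over admissible $\Omega$ gives $J(V)\ge\lambda_1(\Cont_V)$. I note that this same bookkeeping is exactly what produces the constant $V/4$ in the hyperbolic branch of $g$, which confirms that the profile of $\Cont_V$ must be read as an even function $g(|x_1|)$ of $x_1$.

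I expect the only delicate points to be technical rather than conceptual, and they concentrate in the two symmetrization steps: justifying that symmetrizing in $x_2$ and then in $x_1$ each decreases $\lambda_1$ when the competitor may be unbounded, of infinite volume, or irregular, and that the decreasing rearrangement $h_2$ is measurable and satisfies the stated integral identity. The plan is to handle all of this uniformly by arguing on $H^1_0$ test functions and invoking P\'olya--Szeg\H{o} for each Steiner symmetrization, approximating by smooth compactly supported functions where convenient; once these standard facts are in place, the containment $\Omega_2\subset\Cont_V$ and the final comparison are essentially free.
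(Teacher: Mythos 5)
Your proposal is correct and follows essentially the same route as the paper: two orthogonal Steiner symmetrizations, the monotonicity estimate $r\,h_2(r)\le\int_0^r h_2\le V/4$ forcing the symmetrized set into $\Cont_V$, and then domain monotonicity of $\lambda_1$ before passing to the infimum. The only differences are cosmetic (the order of the two symmetrizations and your more explicit appeal to P\'olya--Szeg\H{o} to justify that each symmetrization does not increase $\lambda_1$, which the paper simply asserts as a standard fact).
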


\begin{proof}
	Let $\Omega$ be an open subset of $\Str$, of volume $V$. We perform two successive  Steiner symmetrizations, with respect to the lines $x_1=0$ and $x_2=\frac12$, and denote by $\Omega^*$ the resulting set. We have, according to the definition of Steiner symmetrization,
	\begin{equation*}
	\Omega^*=\left\{(x_1,x_2)\in\RR^2\,:\, \left|x_2-\frac12\right|<f(x_1)\right\},
	\end{equation*}
	where $f:\RR\to \left[0,\frac12\right]$ is an even function, non-increasing in $[0,+\infty)$. Since $f$ is non-increasing, we have, for all $x_1\in (0,+\infty)$,
	\begin{equation*}
	x_1f(x_1)\le \int_{0}^{x_1} f(t)\,dt\le \int_{0}^{+\infty} f(t)\,dt =\frac{V}4,	
	\end{equation*}
	and therefore	$f(x_1)\le \frac{V}{4x_1}$.
    This implies that $f(x_1)\le g(x_1)$, and therefore $\Omega^*\subset\Cont_V$. Since the first Dirichlet Laplacian eigenvalue is non-increasing with respect to Steiner symmetrization and the inclusion of domains, we obtain
	\begin{equation*}
	\lambda_1\left(\Cont_V\right)\le \lambda_1\left(\Omega^*\right)\le \lambda_1(\Omega).
	\end{equation*}
	Passing to the infimum, we get the desired result.
\end{proof}
To conclude the proof of Proposition \ref{propJBounds}, we obtain an explicit lower bound of $\lambda_1\left(\Cont_V\right)$. For $h>0$, let us define the ordinary differential operator $P_h$ by
\begin{equation*}
P_h:=-h^2\frac{d^2}{dt^2}+\pi^2(t^2-1)_+,
\end{equation*}
with $(t^2-1)_+:=\max\left(0,t^2-1\right)$. This operator is positive and self-adjoint, with compact resolvent. It therefore has discrete spectrum, and we denote by $\mu_1(h)$ its first eigenvalue.
\begin{lem} \label{lemL1Lower} For all $V>0$, $\lambda_1\left(\Cont_V\right)\ge \pi^2+\mu_1\left(\frac2V\right)$.
\end{lem}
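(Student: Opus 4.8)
The plan is a one-dimensional reduction: I would bound the Rayleigh quotient on $\Cont_V$ from below by that of an effective Schr\"odinger operator in the variable $x_1$, whose potential is dictated by the first Dirichlet eigenvalue of the (moving) cross-section. By density it suffices to treat real $u\in C_c^\infty(\Cont_V)$. For each fixed $x_1$, the slice $\{x_2:(x_1,x_2)\in\Cont_V\}$ is the interval of length $2g(x_1)$ centered at $1/2$, and $u(x_1,\cdot)$ vanishes at its endpoints. The one-dimensional Dirichlet Poincar\'e inequality on an interval of length $L$ (first eigenvalue $\pi^2/L^2$) then gives, for every $x_1$,
\[
\int |\partial_{x_2} u|^2\,dx_2 \ge \frac{\pi^2}{4g(x_1)^2}\int u^2\,dx_2 .
\]

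Next I would control the longitudinal energy. Set $\phi(x_1):=\left(\int u(x_1,x_2)^2\,dx_2\right)^{1/2}$. Since $u$ is smooth and compactly supported, $\phi$ is Lipschitz with compact support, and where $\phi>0$ one has $\phi'(x_1)=\phi(x_1)^{-1}\int u\,\partial_{x_1}u\,dx_2$; the slice being centered and $u$ vanishing on $\partial\Cont_V$, the boundary terms cancel when differentiating under the integral sign, and Cauchy--Schwarz yields $\phi'(x_1)^2\le \int |\partial_{x_1}u|^2\,dx_2$. Integrating both estimates in $x_1$ and using $\int u^2 = \int \phi^2$, I obtain
\[
\frac{\int_{\Cont_V}|\nabla u|^2}{\int_{\Cont_V}u^2}\ge \frac{\int_\RR \left(\phi'(x_1)^2 + W(x_1)\,\phi(x_1)^2\right)dx_1}{\int_\RR \phi(x_1)^2\,dx_1}, \qquad W(x_1):=\frac{\pi^2}{4g(x_1)^2},
\]
so that $\lambda_1(\Cont_V)$ is at least the bottom of the spectrum of $-\tfrac{d^2}{dx_1^2}+W$ on $\RR$, for which $\phi\in H^1(\RR)$ is an admissible test function.

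It then remains to identify this one-dimensional problem. From $g(x_1)=\min(1/2,\,V/(4|x_1|))$ one computes $W(x_1)=\pi^2$ for $|x_1|\le V/2$ and $W(x_1)=4\pi^2 x_1^2/V^2$ for $|x_1|>V/2$, that is, $W(x_1)=\pi^2+\pi^2\bigl((2x_1/V)^2-1\bigr)_+$. The rescaling $t=2x_1/V$ transforms the Rayleigh quotient of $-\tfrac{d^2}{dx_1^2}+W$ into $\pi^2$ plus the Rayleigh quotient of $-h^2\tfrac{d^2}{dt^2}+\pi^2(t^2-1)_+=P_h$ with $h=2/V$; passing to the infimum gives $\lambda_1(\Cont_V)\ge \pi^2+\mu_1(2/V)$, as claimed.

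I expect the genuine content to be this last matching of the effective potential with $P_{2/V}$ after the natural scaling: the constant $\pi^2$ coming from the full-width part $|x_1|\le V/2$ and the $(t^2-1)_+$ profile coming from the hyperbolic narrowing must line up exactly, which is precisely what the choice of $\Cont_V$ engineers. The remaining difficulty is purely technical, namely justifying the differentiation of $\phi$ across the moving cross-section (handled by the density of $C_c^\infty(\Cont_V)$ and the vanishing of $u$ on $\partial\Cont_V$) and checking the admissibility of $\phi$ for the one-dimensional operator.
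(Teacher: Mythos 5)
Your proof is correct, and its overall architecture is the same as the paper's: transverse Poincar\'e inequality on each vertical slice, reduction to the one-dimensional Schr\"odinger operator $-\frac{d^2}{dx_1^2}+\frac{\pi^2}{4g(x_1)^2}$, and the rescaling $x_1=(V/2)t$ identifying that operator with $P_{2/V}+\pi^2$. The one place you diverge is the longitudinal reduction. You build a single test function $\phi(x_1)=\bigl(\int u(x_1,x_2)^2\,dx_2\bigr)^{1/2}$ and prove $\phi'(x_1)^2\le\int|\partial_{x_1}u|^2\,dx_2$, which forces you to deal with the Lipschitz regularity of $\phi$, its a.e.\ differentiability, and the set where $\phi$ vanishes. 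The paper avoids this entirely by Fubini: after the Poincar\'e step, for each \emph{fixed} $x_2$ the function $x_1\mapsto u(x_1,x_2)$ is smooth with compact support (extend $u$ by zero), so the variational characterization of the first eigenvalue $\nu_1(V)$ of $Q_V$ applies slice-wise in $x_1$, and integrating the resulting inequality over $x_2$ gives $\int_{\Cont_V}|\nabla u|^2\ge\nu_1(V)\int_{\Cont_V}u^2$ with no auxiliary function needed. Both arguments yield exactly the same effective operator and the same conclusion; yours costs a little extra technical care (which you correctly flag and which can indeed be justified), while the paper's is shorter because treating $x_2$ as a passive parameter already suffices --- projecting onto a single one-dimensional profile, as you do, is more than the estimate requires.
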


\begin{proof}
	Let $u$ be a smooth function compactly supported in $\Cont_V$. We have 
	\begin{equation*}
	\int_{\Cont_V}\left|\nabla u \right|^2\,dx= \int_{-\infty}^{+\infty}dx_1\int_{\frac12-g(x_1)}^{\frac12+g(x_1)}dx_2\left(\left|\partial_{x_1}u\right|^2+\left|\partial_{x_2}u\right|^2\right).
	\end{equation*}
	For a given $x_1$, the one-dimensional Poincar\'e inequality on the segment $\left(1/2-g(x_1),1/2+g(x_1)\right)$ gives us
	\begin{equation*}
	\int_{\frac12-g(x_1)}^{\frac12+g(x_1)}\left|\partial_{x_2}u\right|^2\,dx_2\ge \frac{\pi^2}{4g(x_1)^2}\int_{\frac12-g(x_1)}^{\frac12+g(x_1)}u^2\,dx_2.
	\end{equation*}
	We obtain therefore
	\begin{equation*}
	\int_{\Cont_V}\left|\nabla u \right|^2\,dx\ge \int_{\frac12-g(x_1)}^{\frac12+g(x_1)}dx_2\int_{-\infty}^{+\infty}dx_1\left(\left|\partial_{x_1}u\right|^2+\frac{\pi^2}{4g(x_1)^2}u^2\right).
	\end{equation*}
	We now denote by $\nu_1(V)$ the first eigenvalue of the ordinary differential operator
	\begin{equation*}
	Q_V:=-\frac{d^2}{dx_1^2}+\frac{\pi^2}{4g(x_1)^2}.
	\end{equation*}
	According to the variational characterization of $\nu_1(V)$, we get
	\begin{equation*}
	\int_{-\infty}^{+\infty}dx_1\left(\left|\partial_{x_1}u\right|^2+\frac{\pi^2}{4g(x_1)^2}u^2\right)\ge \nu_1(V)\int_{-\infty}^{+\infty}u^2\,dx_1
	\end{equation*} 
	for all $x_2 \in (0,1)$, and therefore
	\begin{equation*}
	\int_{\Cont_V}\left|\nabla u \right|^2\,dx\ge \nu_1(V)\int_{\Cont_V}u^2\,dx.
	\end{equation*}
	By density, the inequality holds for any $u\in H^1_0\left(\Cont_V\right)$, and therefore $\lambda_1\left(\Cont_V\right)\ge \nu_1(V)$. The change of variable $x_1=(V/2)t$ shows that $Q_V$ is unitarily equivalent to $P_h+\pi^2$ with $h=\frac2V$, which establishes the desired result.
\end{proof}

\begin{lem} \label{lemMu1Lower}
	If $h\le4$, $\mu_1(h)\ge \frac{\pi^2h^2}{32}$.
\end{lem}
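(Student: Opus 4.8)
The plan is to reduce the lower bound to the construction of a positive supersolution and then invoke the ground-state (Allegretto--Piepenbrink) representation. Set $W(t):=\pi^2(t^2-1)_+-\tfrac{\pi^2h^2}{32}$, so that the claim $\mu_1(h)\ge \tfrac{\pi^2h^2}{32}$ is equivalent to the nonnegativity of the quadratic form of $-h^2\frac{d^2}{dt^2}+W$. For a positive function $\psi$ and $u=\psi w$ with $u\in C_c^\infty(\RR)$, integration by parts gives
\begin{equation*}
\int_{\RR}\left(h^2|u'|^2+Wu^2\right)dt=\int_{\RR}h^2\psi^2|w'|^2\,dt+\int_{\RR}\left(-h^2\psi''+W\psi\right)\psi\,w^2\,dt,
\end{equation*}
so it suffices to exhibit a positive $\psi$ with $-h^2\psi''+W\psi\ge 0$ in the distributional sense, the inequality for general $u$ following by density.

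I would build an even $\psi$ in two pieces. On the central region I take $\psi(t)=\cos(\omega t)$ with $\omega=\pi/(4\sqrt 2)$, chosen precisely so that $h^2\omega^2=\tfrac{\pi^2h^2}{32}$; then $-h^2\psi''+W\psi=\pi^2(t^2-1)_+\psi$, which is nonnegative wherever $\cos(\omega t)>0$, that is on $(-2\sqrt2,2\sqrt2)$. A single cosine thus handles the entire interval where the well sits and a good deal beyond it. For $|t|$ large I switch, at some $t_1\in(1,2\sqrt2)$, to an exponential tail $\psi(t)=\cos(\omega t_1)\,e^{-\gamma(|t|-t_1)}$, value-matched at $t_1$; there $-h^2\psi''+W\psi=\bigl(-h^2\gamma^2+\pi^2(t^2-1)-\tfrac{\pi^2h^2}{32}\bigr)\psi$, which is nonnegative for all $t\ge t_1$ as soon as $\gamma^2\le \tfrac{\pi^2}{h^2}(t_1^2-1)-\tfrac{\pi^2}{32}$, since the potential is increasing.

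Two matching conditions remain. Positivity of $\psi$ is automatic because $\cos(\omega t_1)>0$ and the tail is exponential. The delicate point is the corner at $t_1$: the distributional $-h^2\psi''$ acquires a Dirac mass $h^2\bigl(\psi'(t_1^-)-\psi'(t_1^+)\bigr)\delta_{t_1}$, and this mass is nonnegative (a concave kink) exactly when $\gamma\ge\omega\tan(\omega t_1)$. Combining this with the supersolution ceiling on $\gamma$ shows that an admissible $\gamma$ exists provided $h^2\le 32\,(t_1^2-1)\cos^2(\omega t_1)$ for a suitable $t_1$; choosing $t_1$ around $1.7$--$2$ one checks that the right-hand side exceeds $16$, so the construction goes through for all $h\le 4$, with room to spare.

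The main obstacle is precisely this gluing: one must fit the decay rate $\gamma$ of the tail into the window between the concavity requirement $\gamma\ge\omega\tan(\omega t_1)$ and the supersolution ceiling $\gamma\le\frac{\pi}{h}\sqrt{t_1^2-1-h^2/32}$, and it is the competition between these two constraints that produces the restriction on $h$. Once a valid pair $(t_1,\gamma)$ is fixed, all three regions yield $-h^2\psi''+W\psi\ge 0$ and the corner contributes a nonnegative Dirac mass, so the ground-state identity gives $\int_{\RR}(h^2|u'|^2+Wu^2)\,dt\ge 0$ and hence $\mu_1(h)\ge \tfrac{\pi^2h^2}{32}$.
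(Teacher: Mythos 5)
Your proof is correct, but it takes a genuinely different route from the paper's. The paper bounds $P_h$ from below by an operator $R_h$ with a square-well potential ($0$ on $|t|<\sqrt2$, $\pi^2$ outside, which sits below $\pi^2(t^2-1)_+$), and then invokes the explicit textbook solution of the square well: the ground-state energy is $\frac{h^2}{2}\rho_1^2(h)$ with $\rho_1(h)$ the smallest positive root of $\rho\tan\rho=\sqrt{2\pi^2/h^2-\rho^2}$, and $h\le 4$ forces $\rho_1(h)\ge\pi/4$, whence the bound. You instead keep the full potential, subtract the target energy $\pi^2h^2/32$, and exhibit a positive piecewise supersolution (a cosine of frequency $\omega=\pi/(4\sqrt2)$ in the center, exponential tails, glued with a concave corner), concluding via the ground-state (Allegretto--Piepenbrink) identity; your two constraints on $\gamma$ play the role of the paper's matching condition, and the admissible window is nonempty precisely when $h^2\le 32(t_1^2-1)\cos^2(\omega t_1)$, which I verified does exceed $16$ for $t_1$ in your suggested range. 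The paper's argument is shorter but outsources the spectral computation to a reference and needs a monotonicity argument for the transcendental equation; yours is self-contained and transfers to any potential for which one can guess a supersolution, at the cost of the two-parameter gluing. One improvement: your final numerical check can be made exact by taking $\omega t_1=\pi/3$, i.e.\ $t_1=4\sqrt2/3\in(1,2\sqrt2)$, which gives $\cos^2(\omega t_1)=1/4$ and $t_1^2-1=23/9$, so the condition becomes $h^2\le 32\cdot\frac{23}{9}\cdot\frac14=\frac{184}{9}>16$, covering $h\le 4$ with no numerics needed.
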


\begin{proof}
	For any $h>0$, $R_h\le P_h$, where $R_h$ is the differential operator
	\begin{equation*}
	R_h:=-h^2\frac{d^2}{dt^2}+W(t),\mbox{ with }W(t):=\left\{\begin{array}{cc}	
	0&\mbox{ if }  |t|<\sqrt{2};\\
	\pi^2&\mbox{ if } |t|\ge \sqrt{2}.\\
	\end{array}
	\right.
	\end{equation*} 
	We therefore have $\mu_1(h)\ge\xi_1(h)$, with $\xi_1(h)$ the first eigenvalue of $R_h$.
	
	The spectrum of $R_h$ is known explicitly  (it is a Schr\"odinger operator with a square well potential, studied in most textbooks on quantum mechanics, see for instance \cite[Chapter 2, Section 9]{Schiff68}). We find $\xi_1(h)=\frac{h^2}{2}\rho_1^2(h)$,
	where $\rho_1(h)$ is the smallest positive solution of the equation $\rho\tan(\rho)=\sqrt{2\pi^2/h^2-\rho^2}$. It is easily seen that the assumption $h\le 4$ implies $\rho_1(h)\ge \pi/4$, and thus	$\mu_1(h)\ge\xi_1(h)\ge \pi^2h^2/32$.
\end{proof}
Gathering all the previous estimates, we obtain, when $V\ge 1/2$, 
\[J(V)\ge \lambda_1(\Cont_V)\ge \nu_1(V)\ge \pi^2+\mu_1\left(\frac2V\right)\ge \pi^2+\frac{\pi^2}{8V^2}.\]

\section{Numerical study of the flat torus}

\subsection{Algorithm and results}
We performed in \cite{BonnaillieNoelLena2016ExpMath} a numerical study of our model problem, using the method introduced by B. Bourdin, D. Bucur and {\'E}. Oudet \cite{BouBucOud09}, with some modifications.  In their work, they looked for partitions which are optimal with respect to the sum of the eigenvalues. They passed to a relaxed formulation, looking for indicator functions instead of domains, and penalizing overlapping supports. They then discretized the resulting optimization problem, through a five points finite difference method for the Laplacian, and performed the optimization iteratively, with the projected gradient algorithm. We made the following changes to their algorithm. First, we considered general $\ell^p$-norms for the energy, rather than just the $\ell^1$-norm, in order to approach the maximum by taking a larger $p$. We also added a last step, in which we built a strong partition from the result of the optimization algorithm, and evaluated its energy without relaxation. As pointed out in \cite{BouBucOud09}, the algorithm proves to be quite sensitive to the initial condition, due to the non-convexity of the problem. For each value of $k$ and $b$, we therefore ran the algorithm several times with different initial conditions, and chose the results giving the lowest energy.

\begin{figure}[!ht]
\begin{center}

\subfigure[$b=0.7$\label{figNumb07}]{\includegraphics[width=3.5cm]{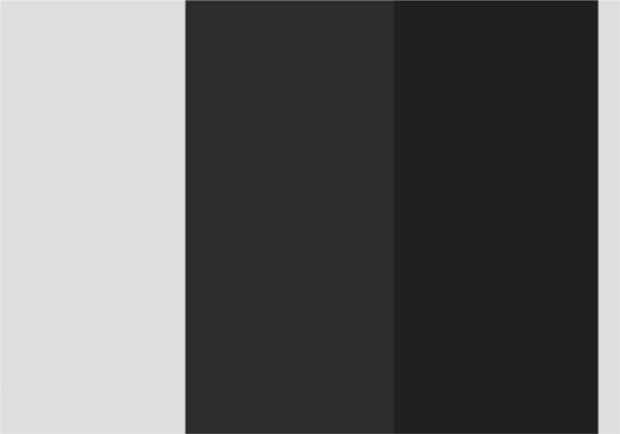}}\hfill
\subfigure[$b=0.72$\label{figNumb072}]{\includegraphics[width=3.5cm]{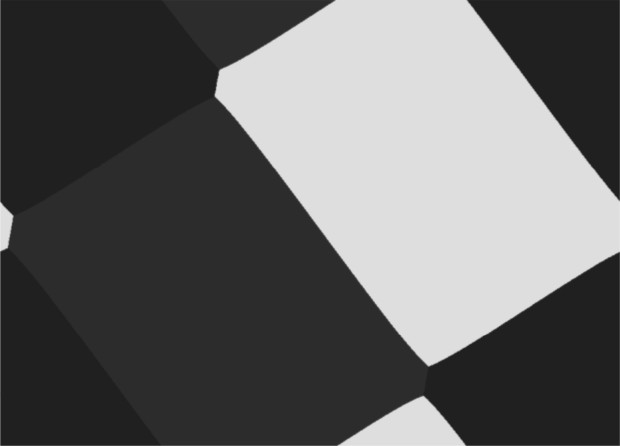}}\hfill
\subfigure[$b=1$\label{figNumb1}]{\includegraphics[width=3.5cm]{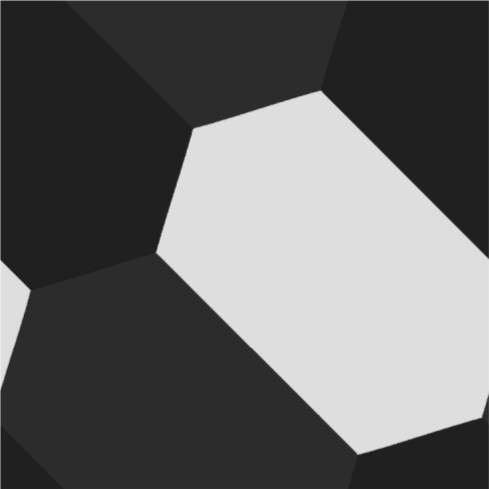}}
\end{center}
\caption{$3$-partitions for some values of $b\,$.\label{figNum}}
\end{figure}

Figure \ref{figNum} presents some  results of the numerical optimization. Comparing the partitions in Figures \ref{figNumb07} and \ref{figNumb072}, we see that $b_3$ seems close to the value $1/\sqrt{2}\simeq 0.7071$ given by Conjecture \ref{conjbk}. It appears in fact slightly higher in our numerical computations, possibly because of the approximations introduced in the algorithm. These results also suggest a transition mechanism from Figure \ref{figNumb07} to Figure \ref{figNumb072}. Indeed, we can construct a $3$-partition of $\Tor(1,1/\sqrt{2})$, with the same energy as $\mathcal D_3(1,1/\sqrt 2)$ but of a different topological type. It is represented on Figure \ref{fig3Part},  and is obtained by projecting on $\Tor(1,1/\sqrt 2)$ a nodal $6$-partition of the double covering $\Tor(2,1/\sqrt 2)$ (see \cite[Section 2.3]{BonnaillieNoelLena2016ExpMath}). The partition on Figure \ref{figNumb072} could then be obtained by a deformation which splits each singular point of order $4$ into two singular points of order $3$.
\begin{figure}[!ht]
\begin{center}
	{\includegraphics[width=3cm]{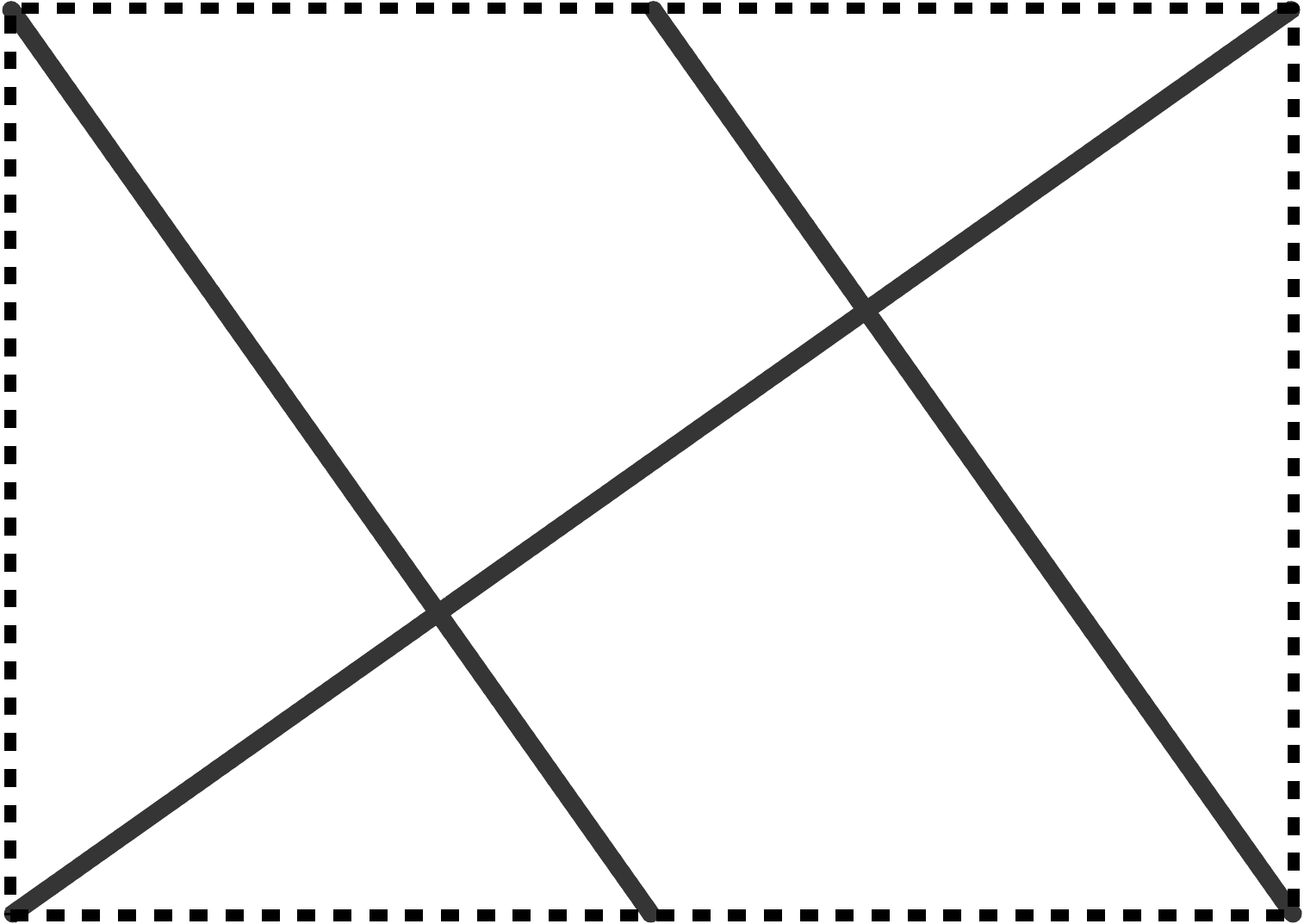}}	
\end{center}
	\caption{$3$-partition of $\Tor(1,1/\sqrt{2})$\label{fig3Part}}
\end{figure}

Finally, Figure \ref{figNumb1} strongly suggests that for $b$ quite larger than $1/\sqrt{2}$, minimal partitions of $\Tor(1,b)$ are close to hexagonal tilings. These tilings can be explicitly constructed, and their energy is an upper bound of $\mathfrak L_3(\Tor(1,b))$, smaller that $\Lambda_3(\mathcal D_3(1,b))$ for some values of $b$. Figure \ref{figUpperL3} summarizes the information thus obtained on $\mathfrak L_3(\Tor(1,b))$. The solid line represents $\lambda_1$ for a tiling hexagon, which is the energy of the hexagonal tiling, the dashed line the energy of $\mathcal D_3(1,b)$, and the crosses the results of the numerical optimization. The transition around $b=1/\sqrt 2$ clearly appears. We obtained similar results for $k\in \{4,5\}$.
\begin{figure}[!ht]
\begin{center}
	{\includegraphics[width=7cm]{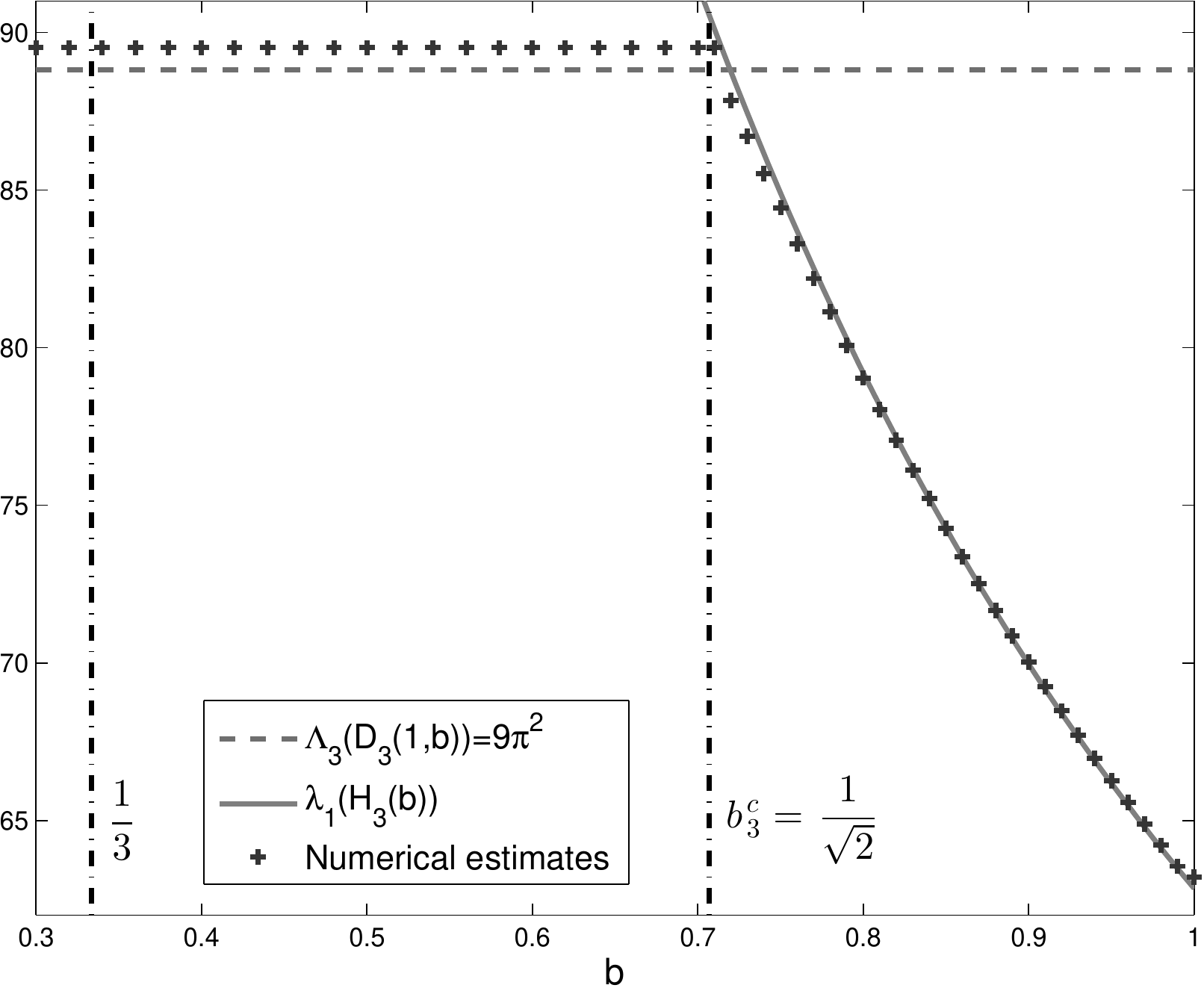}}	
\end{center}
	\caption{Upper bounds of $\mathfrak L_3(\Tor(1,b))$\label{figUpperL3}}
\end{figure}

\subsection{Tilings}

We constructed explicitly hexagonal tilings of the same topological type as the numerical results and satisfying the equal angle meeting property \cite[Section 4]{BonnaillieNoelLena2016ExpMath}. The results are summarized in the following theorem, from \cite[Section 1.3]{BonnaillieNoelLena2016ExpMath}.
\begin{thm}
\label{thmExkPart}
For $k\in\{3\,,\,4\,,\,5\}\,$, there exists $\bH_k \in (0,1)$ such that, for any $b \in (\bH_k,1]\,$, there exists a tiling of $\Tor(1,b)$ by $k$ hexagons that satisfies the equal angle meeting property. We denote by $\Hex_k(b)$ the corresponding tiling domain, and we have
\[\mathfrak{L}_k(\Tor(1,b))\le \min\left(k^2\pi^2, \lambda_1(\Hex_k(b))\right)
,\quad \forall b\in(\bH_{k},1] \,.\]
More explicitly, we can choose 
\[\bH_3=\frac{\sqrt{11}-\sqrt{3}}{4}\simeq 0.396\,,\quad \bH_4=\frac{1}{2\sqrt{3}}\simeq 0.289< b_{4}=\frac 12\,,\quad\mbox{and }\quad \bH_5=\frac{\sqrt{291}-5\sqrt{3}}{36}\simeq 0.233\,.\]
\end{thm}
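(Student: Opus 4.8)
The plan is to realise each admissible tiling as the projection to $\Tor(1,b)$ of a periodic honeycomb in the plane, and to reduce the existence question to the solvability of a small explicit system encoding the $2\pi/3$ condition together with the periodicity imposed by the torus.

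First I would fix, for each $k\in\{3,4,5\}$, the combinatorial type of the tiling suggested by the numerics: a single hexagon $\Hex_k(b)$, centrally symmetric and with all interior angles equal to $2\pi/3$, repeated by a translation lattice $\Lambda$. Such a hexagon is determined (up to orientation) by its three edge lengths $\ell_1,\ell_2,\ell_3>0$, the three pairs of opposite edges pointing in directions $60^\circ$ apart; its translation lattice $\Lambda=\langle t_1,t_2\rangle$ is then an explicit linear function of $(\ell_1,\ell_2,\ell_3)$ and of the orientation. Imposing the equal angle meeting property amounts precisely to keeping the angles at $2\pi/3$, which is built into this parametrisation, so the only remaining constraints come from the torus.

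Next I would impose that $\Lambda$ contains $L:=\langle (1,0),(0,b)\rangle$ with index $k$, i.e.\ that the $k$ translates of $\Hex_k(b)$ tile the fundamental cell of $\Tor(1,b)$. Writing the index-$k$ superlattices of $L$ in normal form $\Lambda=\langle(\tfrac1m,\tfrac{jb}{n}),(0,\tfrac bn)\rangle$ with $mn=k$ and a shear parameter $j$, I would select the pair $(m,n)$ and the shear matching the prescribed topological type --- crucially, for the odd cases $k=3,5$ the shear $j$ must be nonzero, since this ``helical'' offset is what lets an odd number of hexagons close up on a rectangular torus (a straight column or row arrangement forces an even count). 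Requiring the translation lattice of the hexagon to coincide with $\Lambda$ then yields, after fixing which neighbour corresponds to which generator, two vector equations, i.e.\ four scalar conditions, which determine $\ell_1,\ell_2,\ell_3$ and the orientation as explicit functions of $b$.

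Finally I would determine the range of $b$ for which this solution is a genuine, non-degenerate embedded hexagon, i.e.\ for which all three edge lengths stay positive; the threshold $\bH_k$ is the value of $b$ at which one edge length vanishes and the hexagon degenerates to a rectangle or a parallelogram. Solving this degeneration condition is an elementary but careful computation producing a quadratic in $b$, whose relevant root gives the stated values $\bH_3=\tfrac{\sqrt{11}-\sqrt3}{4}$, $\bH_4=\tfrac1{2\sqrt3}$, and $\bH_5=\tfrac{\sqrt{291}-5\sqrt3}{36}$. For $b\in(\bH_k,1]$ the tiling is then a legitimate $k$-partition of $\Tor(1,b)$ all of whose cells are congruent to $\Hex_k(b)$, so its energy equals $\lambda_1(\Hex_k(b))$ and hence $\mathfrak L_k(\Tor(1,b))\le\lambda_1(\Hex_k(b))$; since the strip partition $\mathcal D_k(1,b)$ has energy $k^2\pi^2$, we also have $\mathfrak L_k(\Tor(1,b))\le k^2\pi^2$, and the two bounds combine to give the claimed minimum. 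The main obstacle is this third step for the odd cases: forcing the helical arrangement to close up with exactly $2\pi/3$ angles while meeting the rectangular periods, and then tracking the degeneration precisely enough to extract the explicit surds, is where the real work (and the risk of sign or branch errors) lies.
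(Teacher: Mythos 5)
A preliminary remark: this paper never proves Theorem \ref{thmExkPart} itself; the statement is quoted from \cite{BonnaillieNoelLena2016ExpMath}, and the construction is in Section 4 of that reference. Judged against that construction, your outline is essentially the same, and it is sound. Tilings by lattice translates of a single centrally symmetric hexagon with all angles $2\pi/3$ are exactly what is needed: if $t_1,t_2$ and $t_3=t_2-t_1$ denote the translations to the three pairs of opposite neighbors, one gets the Gram identities $|t_1|^2=\ell_2^2+\ell_2\ell_3+\ell_3^2$, $|t_2|^2=\ell_1^2+\ell_1\ell_3+\ell_3^2$, $|t_3|^2=\ell_1^2+\ell_1\ell_2+\ell_2^2$, and matching $(t_1,t_2)$ to a suitable choice of generators of an index-$k$ superlattice of $L=\ZZ(1,0)\oplus\ZZ(0,b)$ determines $(\ell_1,\ell_2,\ell_3)$ as functions of $b$, the orientation being recovered by a rotation at the end. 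Taking the superlattices spanned by $L$ together with $(\tfrac{1}{3},\tfrac{b}{3})$, $(\tfrac{1}{4},\tfrac{b}{2})$, $(\tfrac{2}{5},\tfrac{b}{5})$ for $k=3,4,5$, the smallest edge length vanishes exactly when $4b^4-7b^2+1=0$, $b^2=\tfrac{1}{12}$, $108b^4-61b^2+3=0$ respectively, and the relevant roots are precisely $\bH_3$, $\bH_4$, $\bH_5$; above these values all $\ell_i$ are positive, each open hexagon embeds in $\Tor(1,b)$, and the bound $\mathfrak{L}_k(\Tor(1,b))\le\min\left(k^2\pi^2,\lambda_1(\Hex_k(b))\right)$ follows, as you say, by comparing with the hexagonal and the strip partitions. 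So your plan, carried out, reproduces the theorem including the stated surds.

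One parenthetical claim of yours is false, although nothing rests on it: a shear-free arrangement does not force an even number of hexagons. For instance, $\Lambda=\ZZ(\tfrac{1}{3},0)\oplus\ZZ(0,1)$ is the translation lattice of a genuine tiling of $\Tor(1,1)$ by three hexagons with all angles $2\pi/3$ (the same Gram system admits a solution with all $\ell_i>0$; the hexagons are three tall columns with zigzag tops and bottoms). The sheared lattices are the right choice for odd $k$ because they reproduce the topological type seen in the numerics and lead to the stated thresholds, not because a parity obstruction excludes the unsheared ones.
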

In order to test the minimality of these tilings, we used the pair compatibility condition (see \cite[Section 4.5]{BonnaillieNoelLena2016ExpMath}). Indeed, if one of these tilings is a minimal $k$-partition of $\Tor(1,b)$, Corollary \ref{corPCC} implies that $\lambda_1(\Hex_k(b))=\lambda_2(2\Hex_k(b))$, with $2\Hex_k(b)$ any one of the polygonal domains obtained by gluing two copies of $\Hex_k(b)$ along corresponding sides. Numerically, this condition does not seem to be met for $b$ close to $1/\sqrt{2}$ when $k=3$, to $1/2$ when $k=4$, and to $1/\sqrt{6}$ and $1$ when $k=5$. Hexagonal tilings therefore appear not to be minimal under these conditions. This idea is supported by the numerical values of the energy, and by the slight curvature visible in the boundary of the numerically obtained partitions. Let us finally point out that when $k=5$ and $b=1$, the numerical result is very close to the partition into $5$ squares represented on Figure \ref{fig5Part} (see \cite[Section 4.4]{BonnaillieNoelLena2016ExpMath}). 
\begin{figure}[!ht]
\begin{center}
	{\includegraphics[width=3cm]{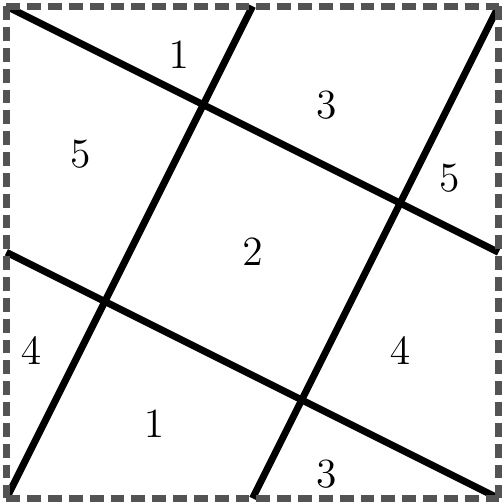}}	
\end{center}
	\caption{$5$-partition of $\Tor(1,1)$\label{fig5Part}}
\end{figure}

These numerical findings reveal a rich structure for minimal partitions of the flat rectangular torus. A better understanding would however require faster numerical algorithms and new theoretical methods.

\subsection*{Acknowledgments}
The author thanks the organizers of the Bru-To PDE's Conference, held at the University of Turin in May 2016, for inviting him to give a talk, on which the present paper is based. This work was partially supported by the ANR (Agence Nationale de la Recherche), project OPTIFORM n$^\circ$ ANR-12-BS01-0007-02, and by the ERC, project COMPAT, ERC-2013-ADG n$^\circ$ 339958. 

{\small
\bibliographystyle{plain}

\begin{thebibliography}{}

\end{thebibliography}


\begin{thebibliography}{10}

\bibitem{BonHelHof09}
V.~Bonnaillie-No{\"e}l, B.~Helffer, and T.~Hoffmann-Ostenhof.
\newblock Aharonov-{B}ohm {H}amiltonians, isospectrality and minimal
  partitions.
\newblock {\em J. Phys. A}, 42(18):185203, 20, 2009.

\bibitem{BonnaillieNoelLena2016ExpMath}
V.~Bonnaillie-No{\"e}l and C.~L{\'e}na.
\newblock Spectral minimal partitions for a family of tori.
\newblock {\em Exp. Math.}, 0(0):1--15, 2016.

\bibitem{BonnaillieNoelLena2014Sector}
V.~Bonnaillie-No{\"e}l and C.~L{\'e}na.
\newblock Spectral minimal partitions of a sector.
\newblock {\em Discrete Contin. Dyn. Syst. Ser. B}, 19(1):27--53, 2014.

\bibitem{BouBucOud09}
B.~Bourdin, D.~Bucur, and {\'E}.~Oudet.
\newblock Optimal partitions for eigenvalues.
\newblock {\em SIAM J. Sci. Comput.}, 31(6):4100--4114, 2009/10.

\bibitem{BucButHen98}
D.~Bucur, G.~Buttazzo, and A.~Henrot.
\newblock Existence results for some optimal partition problems.
\newblock {\em Adv. Math. Sci. Appl.}, 8(2):571--579, 1998.

\bibitem{CafLin07}
L.~A. Cafferelli and F.-H. Lin.
\newblock An optimal partition problem for eigenvalues.
\newblock {\em J. Sci. Comput.}, 31(1-2):5--18, 2007.

\bibitem{ConTerVer05b}
M.~Conti, S.~Terracini, and G.~Verzini.
\newblock On a class of optimal partition problems related to the {F}u\v c\'\i
  k spectrum and to the monotonicity formulae.
\newblock {\em Calc. Var. Partial Differential Equations}, 22(1):45--72, 2005.

\bibitem{HelHof13a}
B.~Helffer and T.~Hoffmann-Ostenhof.
\newblock {On a magnetic characterization of spectral minimal partitions}.
\newblock {\em J. Eur. Math. Soc. (JEMS)}, 15(6):2081--2092, 2013.

\bibitem{HelHof14}
B.~Helffer and T.~Hoffmann-Ostenhof.
\newblock Minimal partitions for anisotropic tori.
\newblock {\em J. Spectr. Theory}, 4(2):221--233, 2014.

\bibitem{HelHof15Review}
B.~Helffer and T.~Hoffmann-Ostenhof.
\newblock A review on large {$k$} minimal spectral {$k$}-partitions and
  {P}leijel's theorem.
\newblock In {\em Spectral theory and partial differential equations}, volume
  640 of {\em Contemp. Math.}, pages 39--57. Amer. Math. Soc., Providence, RI,
  2015.

\bibitem{HelHofTer09}
B.~Helffer, T.~Hoffmann-Ostenhof, and S.~Terracini.
\newblock Nodal domains and spectral minimal partitions.
\newblock {\em Ann. Inst. H. Poincar\'e Anal. Non Lin\'eaire}, 26(1):101--138,
  2009.

\bibitem{HelHofTer10a}
B.~Helffer, T.~Hoffmann-Ostenhof, and S.~Terracini.
\newblock On spectral minimal partitions: the case of the sphere.
\newblock In {\em Around the research of {V}ladimir {M}az'ya. {III}}, volume~13
  of {\em Int. Math. Ser. (N. Y.)}, pages 153--178. Springer, New York, 2010.

\bibitem{NorTer10}
B.~Noris and S.~Terracini.
\newblock Nodal sets of magnetic {S}chr\"odinger operators of {A}haronov-{B}ohm
  type and energy minimizing partitions.
\newblock {\em Indiana Univ. Math. J.}, 59(4):1361--1403, 2010.

\bibitem{Schiff68}
L.~I. Schiff.
\newblock {\em Quantum Mechanics}.
\newblock International Series in Pure and Applied Physics. McGraw-Hill, third
  edition, 1968.

\bibitem{SoaveTerracini2015}
N.~Soave and S.~Terracini.
\newblock Liouville theorems and 1-dimensional symmetry for solutions of an
  elliptic system modelling phase separation.
\newblock {\em Adv. Math.}, 279:29--66, 2015.

\end{thebibliography}

}

\end{document}